\DeclareMathOperator{\Span}{span}
\DeclareMathOperator{\rank}{rank}
\newcommand{\Prob}{\mathbb{P}}
\newcommand{\prob}[1]{\mathbb{P}\left(#1\right)}
\newcommand{\indic}[1]{\mathrm{\textbf{1}}_{#1}}
\newcommand{\as}{\text{a.s.}}
\newcommand{\Esp}[1]{\mathbb{E}\left[#1\right]}
\newcommand{\Espc}[2]{\mathbb{E}\left[ #1 \middle\vert #2 \right]}
\newcommand{\Espi}[2]{\mathbb{E}_{#1}\left[ #2  \right]}
\newcommand{\Espci}[3]{\mathbb{E}_{#1}\left[ #2 \middle\vert #3 \right]}
\newcommand{\setN}{\mathbb{N}}
\newcommand{\setZ}{\mathbb{Z}}
\newcommand{\setR}{\mathbb{R}}
\newcommand{\Lie}[1]{\mathfrak{#1}}
\newcommand{\abs}[1]{\left| #1 \right|}
\newcommand{\normE}[1]{\left| #1 \right|_{E}}
\newcommand{\normEE}[1]{\left| #1 \right|_{E\otimes E}}
\newcommand{\distE}[2]{\mathbf{d}\left(#1,#2\right)}
\newcommand{\distEmulti}[3]{\mathbf{d}_{#1}\left((#2),(#3)\right)}
\newcommand{\ca}[1]{\mathcal{#1}}
\newcommand{\lambdatil}{\tilde{\lambda}}
\newcommand{\Xtil}{\tilde{X}}
\newcommand{\roughXtil}{\tilde{\mathbb{X}}}
\newcommand{\Rtil}{\tilde{R}}
\newcommand{\norm}[1]{ \left\vert\left\vert #1 \right\vert\right\vert }
\newcommand{\normA}[1]{\vert #1 \vert}
\newcommand{\intpart}[1]{\lfloor#1\rfloor}
\newcommand{\diln}{\sqrt{n^{-1}}}
\newtheorem{theorem}{Theorem}[section]
\newtheorem{definition}{Definition}[section]
\newtheorem{property}{Property}[section]
\newtheorem{lemma}{Lemma}[section]
\newtheorem{proposition}{Proposition}[section]
\newtheorem{corollary}{Corollary}[section]
\DeclareMathOperator{\Excursion}{Exc}
\DeclareMathOperator{\ExcursionE}{\widetilde{Exc}}
\DeclareMathOperator{\Embedding}{\iota}
\begin{document}
\title[Drifted L\'evy area from renormalization of Markov chains]{L\'evy area with a drift as a renormalization limit of Markov chains on periodic graphs}

\author{Olga Lopusanschi}
\email{olga.lopusanschi@upmc.fr}
\author{Damien Simon}
\email{damien.simon@upmc.fr}

\address{Laboratoire de Probabilit\'es et Mod\`eles Al\'eatoires (LPMA), UPMC-Sorbonne Universit\'e, UMR CNRS 7599, case 188, 4, pl. Jussieu, F-75252 Paris Cedex 5, France.}
\date{\today}

\begin{abstract}
A careful look at rough path topology applied to Brownian motion reveals new possible properties of the well-known L\'evy area, in particular the presence of an intrinsic drift of this area. Using renormalization limit of Markov chains on periodic graphs, we present a construction of such a non-trivial drift  and give an explicit formula for it. Several examples with explicit computations are included.
\end{abstract}

\maketitle

\tableofcontents

\section{Introduction}
 
    \subsection{General motivations}

Many papers deal with the convergence of discrete vector-valued processes to Brownian motion in the spirit of Donsker's theorem, i.e. using uniform convergence. This is useful for finite dimensional marginals but not for the study of differential equations: uniform topology is too weak to ensure proper approximation of integrals driven by a process of low regularity. Terry Lyons solved this problem by creating the \textit{rough path theory}, which Martin Hairer generalized to \textit{regularity structures}. In both cases, when working in dimensions higher than $2$, the processes are lifted to a more complex structure whose topology ensures the continuity of the solution map of SDEs (the \textit{Itô map}). 

Let us have a quick look at the theory of rough paths. The main idea is to build out of the initial processes involved more elaborate structures that allow to register all the relevant information. Loosely speaking, a (continuous) process in $\setR^d$ (with $d\geq2$) is considered as a first-level information, and we build the corresponding rough path by adding a few more levels. The number of necessary levels is determined by the regularity of the process (if $(X_t)_{t\geq 0}$ is of regularity $\alpha\in(0,1)$, we need $\intpart{1/\alpha}$ levels) and each level is given by an iterated integral of a tensorial product (a double integral for level two, a triple one for level three and so on). In particular, for a process $X=(X^1,\ldots,X^d)$, the second level is determined by the increments of the process (the first level) and the \textit{stochastic area}: a process on the space of $d\times d$ antisymmetric matrices given by 
\begin{equation}
   \label{eq:AreaCont}
A_{s,t}(X)=\left(\iint_{s<u<v<t}dX_u^idX_v^j - dX_u^jdX_v^i\right)_{1\leq i,j\leq d}
\end{equation}
with the convention $A_{0,t}(X)=A_{t}(X)$. A detailed introduction to rough paths can be found in \cite{artLejayIntroRough} and \cite{StFlour}, and more exhaustive treatments in \cite{FrizVicBook} or \cite{FrizHairerBook}. 

Up to now, rough path theory has been either applied to processes of lower regularity than the Brownian motion (for example, fractional Brownian motion) or to the Brownian motion itself, which resulted in a kind of \textit{rewriting of classical stochastic calculus}. Surprisingly -- and it is one of the main motivations for this paper -- it is actually possible to go beyond the stochastic calculus as we know it.
The second level of the Brownian rough path is made of a symmetric and an antisymmetric part. Whereas we have the choice between Itô and Stratonovich integration for the symmetric part, the antisymmetric one is given by the stochastic area of the Brownian motion, the \textit{Lévy area}, and is not affected by the choice of the integration scheme for the symmetric part. However, for some sequences approximating the Brownian motion, there is room on the second level for an extra term, the \textit{area drift} or \textit{area anomaly}. 

When approximating an SDE driven by a Brownian motion in a classical way, we are looking for a sequence which does not have an area anomaly at the limit. On the contrary, theorem \ref{thm:convergencegenerale} concentrates on a class of models which may exhibit a non-zero area anomaly at the limit. More precisely, we prove that Markov chains on periodic graphs (roughly speaking, graphs that are constructed by translation of a given finite graph) to which we add the area component, converge in the suitable rough path topology to $(B_t,\mathcal{A}_t+t\Gamma)_{t\in[0,\tau]}$, where $(B_t)_{t\ge 0}$ is the $d$-dimensional Brownian motion, $(\ca{A}_t)_t=(A_t(B))_t$ its associated Lévy area and $\Gamma$ is a $d\times d$ antisymmetric matrix which represents the \textit{area anomaly}.

One of our main goals is to show how getting a non-trivial area anomaly can be used to build new rough paths above Brownian motion and thus to go beyond classical calculus. Consequently, this is \emph{not} a question of classical stochastic integration but a kind of \emph{completion} of classical integration. Such deformations of classical stochastic calculus have been foreseen but never illustrated by explicit, simple, discrete Markovian processes with natural geometric interpretation. In \cite{artFrizGassiatLyons}, the authors exhibit the  area anomaly of a magnetic field but this only concerns the continuous case. The article \cite{artLejayLyonsArea} studies how the area anomaly influences the behaviour of SDEs. The present paper is an attempt to fill in some of the blanks mentioned above and to show that area anomaly from theorem \ref{thm:convergencegenerale} is in fact a generic property of some renormalized discrete Markov chain and should be taken into account in the study of SDEs originating from many discrete processes. 

\bigskip

\paragraph{\emph{Acknowledgements.}} D.~S. thanks Q.~Berger for valuable discussions on random walks in random environments at the origin of some constructions from the present paper. D.~S. and O.~L. want to thank the referee of the present article for valuable remarks and suggestions on bibliography on periodic graphs and their generalizations. D.~S. is partially funded by the Grant ANR-14CE25-0014 (ANR GRAAL).

 \subsection{Structure of the present article}
    \label{subsect:artStruct}
The present article is organized into five sections, the introduction being the first one. We present a very simple example of a Markov chain which exhibits a non-zero area anomaly in \ref{subsect:IntroExample}. 

After introducing some useful definitions like those of Markov chains $(X_n)_{n\in\setN}$ on periodic graphs and of stochastic signed area $(A_n(X))_{n\in\setN}$ associated to them, we state our main result in \ref{subsect:MainResult}. We then present a historical overview of some results that have a connection to ours. The section ends by a discussion on the consequences of theorem~\ref{thm:convergencegenerale} on the universality class of the multidimensional Brownian motion, and expresses some caveats about the continuous description of the large size limit of discrete models.

In section \ref{sect:genFrame}, we describe the general settings of our theorem and state some useful results: in particular, we present a decomposition of the process $(X_n,A_n(X))_{n\in\setN}$ which is based on excursion theory and inspired by renormalization ideas.

Section \ref{sect:Proof} is dedicated to the proof of our main result (theorem \ref{thm:convergencegenerale}) which is a generalization to our class of Markov chains of the Donsker-type theorem for rough paths from \cite{artBreuilFriz}. 

Finally, in section~\ref{sect:examples}, we present some applications of our result: we introduce a model in 3D for which we compute $\Gamma$ by numerical simulations and we give an example of application to an SDE. We end the section with a list of open questions which arise in connection with the area anomaly $\Gamma$.

   \subsection{An easy discrete example: rotating sums of Bernoulli r.v.}
   \label{subsect:IntroExample}
    
Let $(U_n)_{n\in\setN^*}$ be a sequence of independent Bernoulli random variables such that $\prob{U_1=1}=1-\prob{U_1=-1}=p$. We define two complex-valued processes $(Z_n)_{n\in\setN}$ and $(Z'_n)_{n\in\setN}$ in the following way: $(Z_n)_n$ is the random walk with increments chosen uniformly in $\{1,i,-1,-i\}$ and $(Z'_n)_n$ satisfies $Z'_0=0$ a.s. and, for $n\ge 1$, $Z'_n=\sum_{k=1}^n i^{k-1}U_k$. We set $X_n=\mathcal{R}(Z_n)$, respectively $X'_n=\mathcal{R}(Z'_n)$, and $Y_n=\mathcal{I}(Z_n)$, respectively $Y'_n=\mathcal{I}(Z'_n)$. A classical exercise in probability consists in checking that the laws of $Z_n/\sqrt{n}$ and of $Z'_n/\sqrt{2np(1-p)}$ both converge to a normal law $\ca{N}(0,1)$. Moreover, the processes $(X_n,Y_n)_n$ and $(X'_n,Y'_n)_n$ embedded in continuous time by linear interpolation converge both in law to a standard Brownian motion in the uniform topology. The discrete stochastic area of the process $(X_n,Y_n)_n$ is defined as 
\begin{equation}\label{eq:discreteareaintro}
A_n(X,Y)=\frac{1}{2} \sum_{1\leq k <l\leq n} (X_k-X_{k-1})(Y_l-Y_{l-1})-(Y_k-Y_{k-1})(X_l-X_{l-1})
\end{equation}
and $A(X',Y')_n$ is defined in the same way for the second process. The process $(A_n(X,Y)/n)_n$ embedded in continuous time is known to converge to the L\'evy area of the Brownian motion; the present paper deals with the rescaled L\'evy area $(A_n(X',Y')/(2np(1-p)))_n$ of the second process $(X'_n,Y'_n)_n$ and shows that, in the correct topology, it converges to the Lévy area of the Brownian motion with an \emph{additional} drift $\gamma$. This drift is easily evaluated as:
\begin{equation}\label{eq:rotatinggamma}
\gamma = \lim_{n\to\infty} \frac{\Esp{A(X',Y')_n}}{2np(1-p)} = \frac{(2p-1)^2}{8p(1-p)}
\end{equation}
and some additional computations show that the limit of the first higher cumulants of $(A_n(X',Y')/(2np(1-p))$ coincide with the ones of the classical L\'evy area. 

Figure \ref{fig:Z4Z} describes the process $(X'_n,Y'_n)$ as a Markov process in $\setZ^2$. In particular, the graph $G_0$ is induced on $\{0,1,2,3\}$ by the Markov chain $(X'_n,Y'_n)$ (we glue together the edges that connect a vertex to points of the same type). Figure \ref{fig:histogramZ4Z} presents histograms of the marginal laws of $(X_n,Y_n,A_n(X,Y))$ and $(X'_n,Y'_n,A(X',Y')_n)$ obtained by numerical simulations. This figure shows that, in the large $n$ limit and with the classical rescalings, the two processes are very similar, except for the additional drift $\gamma$ in the Lévy area (the \emph{area anomaly}). Up to our knowledge, such a limit process in continuous time has never been described.

Intuition about the similarities and differences between the two processes can be quickly explained by the following renormalization argument. The increments of $(Z_n-Z_{n-1})_n$ are independent, whereas only the increments $(Z'_{4n+4}-Z'_{4n})_n$ are independent. In a time interval $\{4n,4n+1,4n+2,4n+3,4n+4\}$, the increments of $(Z'_n)_n$ are bounded  and thus do not contribute to the Brownian limit in the uniform topology; however, during the same time interval, correlations among these increments produce non-centered random areas. From a renormalization point of view, the local time correlations are irrelevant for the uniform topology but relevant for the rough path topology. 

\begin{figure}
\begin{center}
\includegraphics{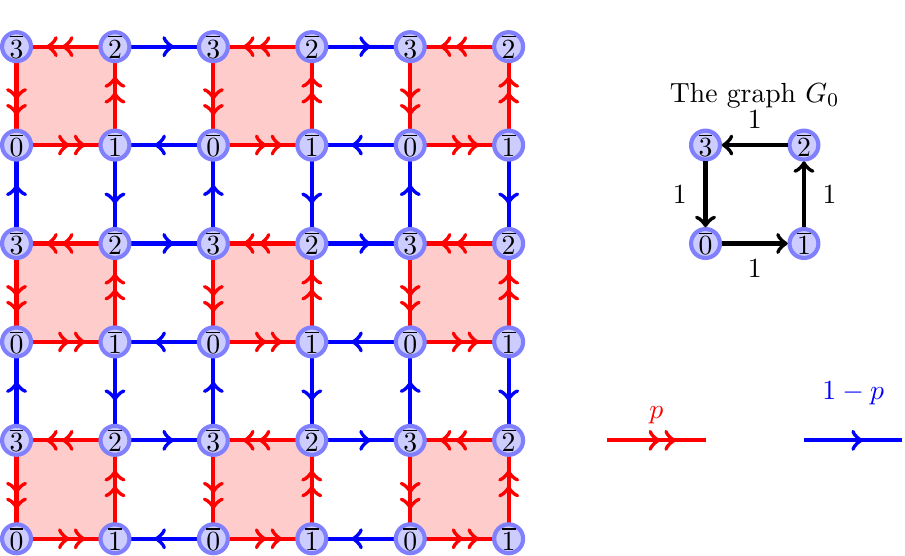}
\caption{Markov chain $(Z'_n)_n$ of section \ref{subsect:IntroExample}. On the left hand side, its authorized transitions on the periodic graph $G=\setZ^2$; on the top right hand side, its deterministic projection on the graph $G_0$ identified to $\setZ/4\setZ$. If $p\uparrow 1$, the chain tends to make more and more loops around the red shaded orbits.}
   \label{fig:Z4Z}
\end{center}
\end{figure}

\begin{figure}
\begin{center}
\includegraphics{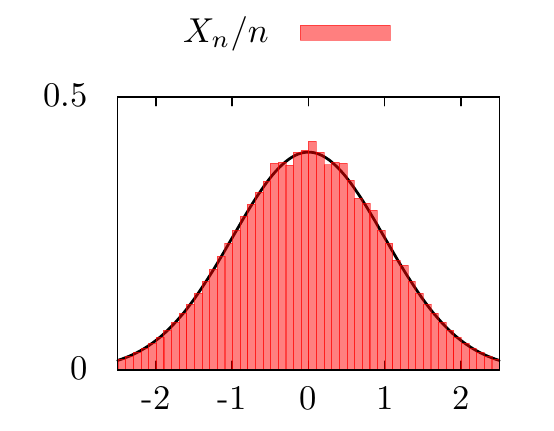}
\includegraphics{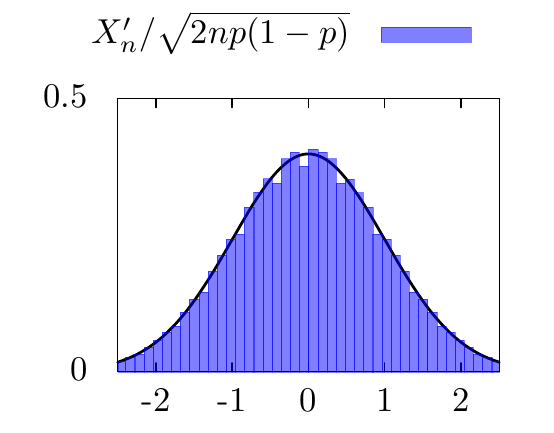}
\includegraphics{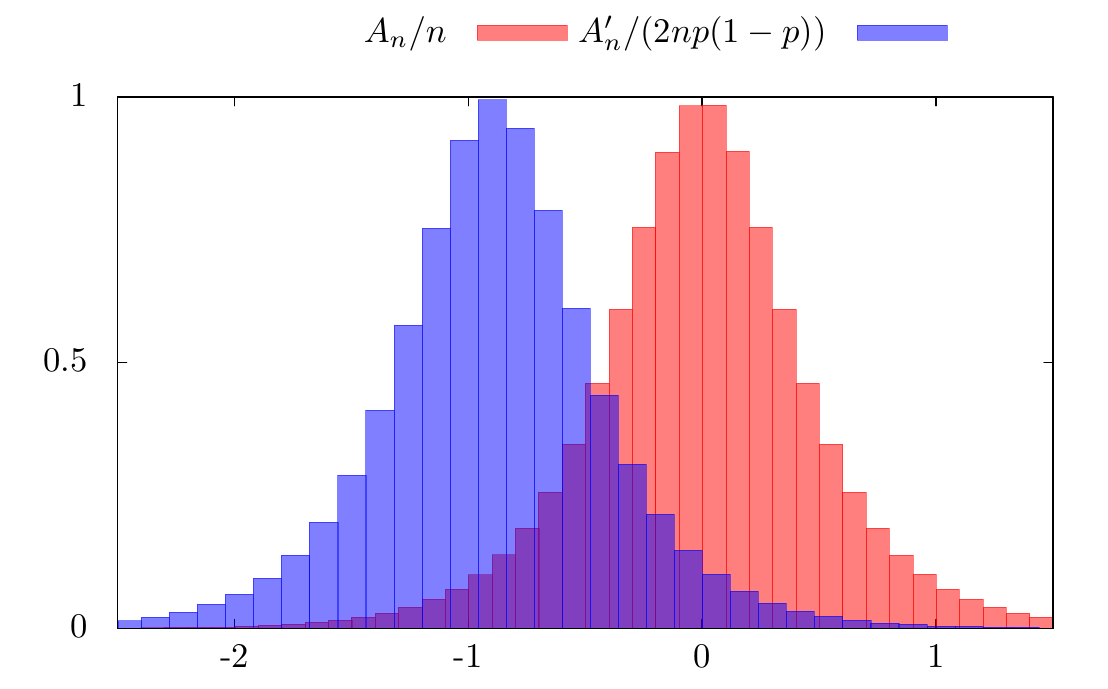}
\end{center}
\caption{\label{fig:histogramZ4Z}
Empirical distributions of $X_n/\sqrt{n}$ (top left), $X'_n/\sqrt{2np(1-p)}$ (top right), with the expected normal laws, and empirical distributions of the Lévy areas (below) $A_n/n$ (red, right) and $A_n(X',Y')/(2np(1-p))$ (blue, left). The parameters are chosen as $n=250000$ and $p=0.9$. Data are accumulated over 64000000 independent realizations. The empirical means of $A_n(X,Y)$ and $A'_n$ are $2.89\cdot 10^{-5}$ and $-0.88874$ and their empirical standard deviations are $0.500031$ and $0.499989$. The theoretical values are $\Esp{A_n(X,Y)/n}=0$, $\Esp{A_n(X',Y')/(2np(1-p))}=8/9=0.888\ldots$ and $\sigma=1/2$.}
\end{figure}

\subsection{Main result}
    \label{subsect:MainResult}
  \subsubsection{Main theorem}
     \label{subsubsect:MainTheorem}
  We start by properly defining the framework for our theorem.
\begin{definition}[periodic subgraph]
   \label{def:periodic subgraph}
Let $E$ be a finite-dimensional vector space. A periodic subgraph of $E$ is a infinite subset $G$ of $E$ such that:
\begin{enumerate}[(i)]
\item all the points are separated, 
\item $G$ is invariant under the translation action of a lattice $\Lambda\subset E$ on $G$.
\end{enumerate}
\end{definition}

\begin{property}
The graph can be decomposed as $G=\bigsqcup_{\lambda\in\Lambda}\lambda.G_0$ where $G_0$ is a finite subset of $G$ and $\lambda.G_0$ is the translation of $G_0$ by $\lambda\in\Lambda$.
\end{property}
This property means that any point $x$ of $G$ can be parametrized in a unique way as $(\lambda,x_0)$ where $\lambda\in\Lambda$ and $x_0\in G_0$. We write $\lambda=\pi_\Lambda(x)$ and $x_0=\pi_0(x)$ for the two projections. We use alternatively the notation $x$ or $(\lambda,x_0)$ for a point in $G$.

An illustration of this property is the decomposition of the model from example~\ref{subsect:IntroExample} detailed in figure~\ref{fig:Z4Z}.

\begin{definition}[invariant Markov chain on $G$]
Let $G$ be a periodic subgraph of $E$. A $G$-valued Markov chain $(X_n)_{n\in\setN}$ with transition law $Q$ on a probability space $(\Omega,\ca{F},\Prob)$ is $\Lambda$-invariant if and only if, for all $x,y,\in G$ and for all $\lambda\in\Lambda$, $Q(x+\lambda,y+\lambda)=Q(x,y)$.
\end{definition}

As it will be explained section~\ref{subsect:LambdaInvariantMarkov}, such a Markov chain $(X_n)_n$ induces a Markov chain $(\pi_0(X_n))_n$ on $G_0$. If the latter is irreducible, we can define a sequence of stopping times for it:
\begin{align*}
T_0	&= 0,
\\
T_{k+1}	&=	\inf \left\{ n>T_k: \pi_0(X_n)=\pi_0(X_0) \right\},		\qquad k\geq 0.
\end{align*}
Since $G_0$ is finite, for any initial law $\mu$ on $G_0$, $T_1$ has finite moments of all orders.

For an $E$-valued sequence $(x_n)_n$, we introduce its continuous rescaled version given by 
\begin{align*}
x^{(N)}_t=\frac{x_{\intpart{Nt}} + (Nt-\intpart{Nt}) (x_{\intpart{Nt}+1}-x_{\intpart{Nt}})}{\sqrt{N}}
\end{align*} 
as in the classical Donsker theorem. Then the rough path corresponding to $x^{(N)}$ is defined as
\begin{equation}
  \label{eq:Embedding}
\Embedding^{(N)}(x_\bullet,A_\bullet(x))_t := 
\Big(x^{(N)}_t, A_t(x^{N})) \Big)
\end{equation}
where $A_t(x^{N})$ is given by the formula \eqref{eq:AreaCont}. These variables belong to the space $G^2(E)$, which is described in section~\ref{subsect:G2E}. We denote by $\delta_{\epsilon}$ the standard homogeneous dilatation on $G^2(E)$: $\delta_{\epsilon}(x,a)=(\epsilon x,\epsilon^2 a)$.

We can now state our main theorem:

\begin{theorem}
    \label{thm:convergencegenerale}
Let $G$ be a $\Lambda$-periodic graph on a finite dimensional vector space $E$. Let $(X_n)_n$ be a $G$-valued $\Lambda$-invariant Markov chain on $G$ with bounded increments (i.e. there exists $R>0$ such that $\normE{X_{n+1}-X_{n}}\leq R$ a.s.) and such that $(\pi_0(X_n))_{n\in\setN}$ is irreducible.

Let $v=\Esp{T_1}^{-1}\Esp{X_{T_1}}\in E$ and $\beta=\Esp{T_1}\in\setR_+^*$. Let $(\Xtil_n)_{n\in\setN}$ be the $E$-valued process defined by $\Xtil_n=X_n - n v$. Up to a dimensional reduction and a linear transformation of the graph $G$, the covariance matrix of $\Esp{X_{T_1}}-T_1v$ may always be assumed to be $C I_n$ with $C>0$.

For any $\tau>0$, we have the following convergence in  distribution:
\begin{equation}
\left(\delta_{\sqrt{C^{-1}\beta}}\Embedding^{(N)}(\Xtil_{\bullet},A_{\bullet}(\Xtil))_t\right)_{0\leq t\leq \tau}
\underset{N\to\infty}{\longrightarrow}
\left(B_t,\ca{A}_t+t \Gamma\right)_{0\leq t\leq \tau}
\end{equation}
in the topology of $\ca{C}^{0,\alpha-\mathrm{Holder}}([0,\tau],G^2(E))$ for $\alpha<1/2$, where $B$ is a Brownian motion on $E$, $\ca{A}$ its Lévy area as defined by classical stochastic calculus and $\Gamma$ a constant antisymmetric matrix, the \emph{area anomaly}, given by \eqref{eq:Gamma}.
\end{theorem}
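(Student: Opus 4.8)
The plan is to establish the convergence in two stages: first prove the Donsker-type convergence at the level of the rescaled process and its area using the excursion decomposition announced for section~\ref{sect:genFrame}, then identify the limiting drift $\Gamma$ as the expected area accumulated per excursion. The key structural observation is that the stopping times $T_k$ partition time into i.i.d.\ (or at least stationary, one-dependent) blocks: between consecutive returns of $\pi_0(X_n)$ to its starting type, the increments $X_{T_{k+1}}-X_{T_k}$ form an i.i.d.\ sequence with finite moments of all orders (since $T_1$ does, by finiteness of $G_0$ and the boundedness of increments). After centering by the drift $v$ to form $\Xtil_n$, the block increments $\Xtil_{T_{k+1}}-\Xtil_{T_k}$ are centered i.i.d.\ with covariance $C I_n$ (after the stated reduction), so a classical functional CLT gives convergence of $\Xtil^{(N)}$ to a Brownian motion $B$ with the correct variance normalization $\delta_{\sqrt{C^{-1}\beta}}$.

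For the second level, I would decompose the discrete area $A_n(\Xtil)$ additively along the excursion structure. Chasles-type bilinearity of the signed area~\eqref{eq:AreaCont} splits $A_{T_K}(\Xtil)$ into three contributions: the sum of the \emph{intra-excursion} areas $A_{T_k,T_{k+1}}(\Xtil)$ over blocks, the \emph{cross terms} coupling the increment of one block with the position at the start of later blocks, and a negligible boundary remainder. The cross terms assemble into the genuine Lévy area $\ca{A}_t$ of the limiting Brownian motion by the same functional-CLT argument applied to the area functional, exactly as in the Donsker-type rough-path theorem of \cite{artBreuilFriz}. The intra-excursion areas, on the other hand, are i.i.d.\ with a \emph{nonzero mean} $\Esp{A_{T_1}(\Xtil)}$; their rescaled sum (there are $\sim N t/\beta$ of them up to time $Nt$, each scaled by $N^{-1}$) converges by the law of large numbers to a deterministic linear-in-$t$ term, yielding the drift. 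This pins down
\begin{equation}
  \label{eq:Gamma}
  \Gamma = \frac{1}{\beta}\,\Esp{A_{T_1}(\Xtil)},
\end{equation}
the area anomaly, as the mean signed area of a single centered excursion divided by its mean duration.

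To upgrade ordinary convergence to convergence in the $\alpha$-Hölder rough path topology on $G^2(E)$ for $\alpha<1/2$, I would combine the finite-dimensional limit identification above with a tightness/moment estimate. The standard route is a Kolmogorov-type criterion: using the finite moments of $T_1$ and the boundedness of increments, one bounds $\Esp{\normE{\Xtil^{(N)}_t-\Xtil^{(N)}_s}^{2m}}$ and the analogous second-level quantity $\Esp{\normEE{A_{s,t}(\Xtil^{(N)})}^{m}}$ by powers of $\abs{t-s}$ with exponents strictly above $1$, which gives tightness in the Hölder rough path space and hence the claimed convergence in distribution.

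\emph{The main obstacle} I expect is controlling the second-level tightness uniformly in $N$, i.e.\ obtaining the moment bounds on the discrete area over short time windows that straddle several excursions without accumulating error. The difficulty is that the area is a quadratic, non-additive functional, so the boundary/cross terms between blocks must be shown to be genuinely subleading (of lower order than $N$) rather than merely bounded in expectation; separating the mean-carrying intra-excursion part from the fluctuating cross part, and verifying that the remainder truly vanishes after rescaling in the Hölder norm, is where the careful work lies and where the excursion decomposition of section~\ref{sect:genFrame} will be indispensable.
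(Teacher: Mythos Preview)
Your plan matches the paper's proof almost exactly: the paper structures the argument as four lemmas---first applying the rough-path Donsker theorem of \cite{artBreuilFriz} to the i.i.d.\ excursion endpoints $(\lambdatil_k(\Xtil))_k$, then adding the intra-excursion areas via the law of large numbers and Slutsky (this is where $\Gamma$ emerges, exactly as you describe), then passing from excursion time to real time using $\kappa(n)/n\to\beta^{-1}$, and finally proving $\alpha$-H\"older tightness by the Kolmogorov criterion. One bookkeeping slip: your displayed formula for $\Gamma$ forgets to apply the dilation $\delta_{\sqrt{C^{-1}\beta}}$ on the second level; the paper obtains $\Gamma^{ij}=C^{-1}\Esp{A^{ij}_{L_0}(\ExcursionE^{(0)}(\Xtil))}$ (equivalently $C^{-1}\Esp{A^{ij}_{T_1}(\Xtil)}$, split there into $C^{-1}(\Esp{A^{ij}_{L_0}(\Excursion^{(0)}(X))}+\Esp{\textbf{Corr}^{ij}_0(X)})$), so the prefactor is $C^{-1}$ rather than $\beta^{-1}$.
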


\subsubsection{Some historical background}
Until now, in the rough path setting, most of the convergence theorems have dealt with processes with i.i.d. centered increments. Based on the Stroock-Varadhan's result from \cite{artStrVar}, a Donsker theorem for rough paths in the $\ca{C}^{\alpha}([0,1],\setR^d)$ topology for $\alpha<1/2$ has been proved in theorem 3 from \cite{artBreuilFriz}, which our theorem 1.1 generalizes. In \cite{artCubature}, the discrete sequence converging to the Brownian rough path is constructed by the concatenation (in the rough path sense) of renormalized i.i.d. copies of the cubature formula on Wiener space. The main result of \cite{artRWLevy} gives sufficient conditions for convergence in distribution of a random walk on $G^N(\setR^d)$ to a Lévy process on $G^N(\setR^d)$ in a suitable rough path topology. While \cite{artCubature} and \cite{artRWLevy} are also generalizations of theorem 3 from \cite{artBreuilFriz}, they do not apply to the class of processes described in our theorem 1.1, as the increments of these processes are not necessarily i.i.d. Moreover, none of these results is concerned by the study of the area anomaly, as we will see that it is trivial when the discrete process has i.i.d. increments.

A more general setting, which encompasses that of rough paths, is given by random walks on different types of groups.  A result due to Wehn (see, for example, \cite{BreuRWLie}, theorem 1.3, for details) states that, when $\mu$ is a centered probability measure on a connected Lie group, $\mu^{*n}$ (the $n$th convolution of $\mu$) converges to the Wiener measure (under certain conditions on $\mu$). In \cite{artRWDiscreteGroups}, the main result states that, when $\mu$ is a probability measure with finite support on a discrete group of polynomial volume growth (nilpotent Lie groups, and in particular $G^N(\setR^d)$, are of polynomial volume growth), $\mu^{*n}$ converges to the heat kernel of a centered left-invariant sub-Laplacian on a certain simply connected nilpotent Lie group. In both cases, we deal with i.i.d. increments and no area anomaly is exhibited at the limit. However, in \cite{artRWDiscreteGroups} the possibility of a non-centered measure $\mu^{*n}$ is taken into account, just to show that the asymptotic behavior is similar to the non-centered case modulo a transformation by a multiplicative function (which is equivalent to re-centring $\mu$). What this shows in particular is that \emph{our area anomaly is not a question of the process drift} but a new phenomenon.

In the uniform topology, the convergence of processes similar to ours is widely studied. In \cite{artEnriqKif} and \cite{artKazUchi}, authors have already considered the convergence of random walks on periodic graphs and Markov chains on graphs respectively, and in \cite{artBaur} an invariance principle has been proved for a certain class of random walks in random environment. 
In \cite{artAlbanese} and \cite{artRWCristalLatt}, the authors study the convergence of a random walk, symmetric and non-symmetric respectively, on a lattice graph through the convergence of the corresponding discrete heat kernel.
Their lattice graph is a generalization to a Riemannian manifold of the notion of periodic graph from our article (a detailed theory on lattice graphs and the finite quotient graphs, as well as their properties, can be found in chapters 3 and 4 from \cite{bookCrystall}). While there is no room for an area anomaly for the reversible random walks from \cite{artAlbanese} (the reversibility of the process implies a zero area anomaly), the loops that can be present in the processes from \cite{artBaur} and \cite{artRWCristalLatt} might generate a non-zero limit stochastic area drift.

   \subsubsection{Discussion on the renormalization constant}
Let us now stop briefly to explain the choice of our renormalization constant, namely $v$. It is immediate to ask why we didn't simply set $\tilde{X}_n = X_n - \Esp{X_n}$. In this case, we argue that, first, we can not get an 
explicit infinite constant (of the type $nv$), second, it is a \textit{sufficient} but not a \textit{necessary} drift and finally, this would not allow us to get an explicit expression of $\Gamma$ of the type \eqref{eq:Gamma}.

As a consequence of the ergodic theorem applied to the Markov chain $(\pi_0(X_n))_n$, we could have also defined $v$ as
\begin{align*}
v = \sum_{x_0\in G_0}\nu(x_0)\sum_{y\in G}Q(x^*,y)(y-x^*)
\end{align*}
where $\nu$ the invariant probability of $(\pi_0(X_n))_n$ and $x^*$ a representative of the equivalence class of $x_0$ in $G$ ($z\sim z'$ if $\pi_0(z)=\pi_0(z')$). This is similar to the approach the authors adopt in \cite{artRWCristalLatt}.
Our choice of the expression of the renormalization constant is motivated by the desire to highlight the centering of the excursions, which is indispensable for applying the Donsker type theorem from \cite{artBreuilFriz}.

   \subsubsection{Symmetry of the process and area anomaly}    
In \cite{artRWCristalLatt}, the symmetry of the random walk on the quotient graph $X_0$ is sufficient but not necessary for $\rho_{\setR}(\gamma_p)$, the analogue of the constant $v$ called the asymptotic direction, to be zero (it is the condition $\gamma_p = 0$ that is equivalent to the symmetry of the random walk).

In our case, the symmetry of the random walk on $G_0$ is not necessary for $v$ to be zero either. In the example from~\ref{subsect:IntroExample}, the random walk on $G_0$ is not symmetric whatever value of $p$ we choose, and nevertheless $v = 0$ for $p=1/2$.

 On the other hand, the symmetry of $(X_n)_n$ (or its reversibility) will be a sufficient condition for $v=0$ and also for $\Gamma=0$. The most immediate example is provided by the framework of the classical Donsker theorem, i.e. the case when we are dealing with sums of i.i.d. centred r.v.'s.
 
However, we can have $(X_n)_n$ non-symmetric and simultaneously $v=0$ and $\Gamma \neq 0$ (see example \ref{subsect:IntroExample}), which shows that the area anomaly \textit{is not a product of the drift of the process}. We can also have $v\neq 0$ and $\Gamma=0$ in the case of i.i.d. non-centered r.v.'s (we need, of course, to re-center the variables before we pass to the limit).
 
   \subsubsection{Consequences of theorem \ref{thm:convergencegenerale}.}

The hypotheses of theorem \ref{thm:convergencegenerale} are satisfied in many models coming from statistical mechanics where jumps in space are often local. Up to our knowledge, the area anomaly is a new feature never described in any model, even if the examples that we present look natural. One may wonder whether this area anomaly is relevant. We now explain why it is the case.

The general philosophy beyond renormalization and large scale limits of discrete models is to build continuous models such that they are large scale limits of various discrete models and such that it is possible to compute directly with them. 

Phrased in a provocative way, our theorem implies in particular that \emph{a two-dimensional standard Brownian motion may not be the same as two independent one-dimensional Brownian motions} as soon as one wishes to use it to drive a stochastic differential equation. The difference lies in the area anomaly $\Gamma$ which is irrelevant at the level of the positions $(B_{t_k})_k$  but is relevant in non-linear SDEs. 

Thus, when several Brownian motions emerge in the description of the limit of discrete processes, the consequence of the previous theorem is that one \emph{needs in general to wonder} about the presence of area anomalies between components before writing down any stochastic integration. 

Hopefully in many cases, it is easy to prove without any calculation that the area anomaly is zero: as it has already been mentioned, this is the case for reversible processes. However, for irreversible Markov chains, especially useful in non-equilibrium statistical mechanics, one should expect in general a non-zero anomaly.

A detailed study of the area anomaly $\Gamma$ and its generalization to a larger class of processes will be present in a next paper in preparation.

\section{Tools and additional results}
   \label{sect:genFrame}
 
   \subsection{Additional results on $\Lambda$-invariant Markov chains}
     \label{subsect:LambdaInvariantMarkov}
\begin{property}
    \label{property:CMsurG0}
Let $(X_n)_{n\in\setN}$ be a $\Lambda$-invariant Markov chain on a periodic subgraph $G$ of $E$ as in definition~\ref{def:periodic subgraph}. Then the process $(\pi_0(X_n))_{n\in\setN}$ is a $G_0$-valued Markov chain.
\end{property}

\begin{proof}
Let $f$ be any bounded Borel function $G_0\to E$.
\begin{align*}
\Espc{ f(\pi_0(X_{n+1})) }{\ca{F}_n}
&=
\Espc{ \sum_{\lambda \in \Lambda} \indic{\pi_\Lambda(X_{n+1})=\lambda} f(\pi_0(X_{n+1})) }{\ca{F}_n} 
\\
&= \sum_{\lambda \in \Lambda}\Espc{  \indic{\pi_\Lambda(X_{n+1})=\lambda} f(\pi_0(X_{n+1})) }{\ca{F}_n} = \sum_{\lambda \in \Lambda} (Qg_\lambda)(X_n) 
\end{align*}
where $g_\lambda(x)=\indic{\pi_\Lambda(x)=\lambda} f(\pi_0(x))$ by the Markov property for $(X_n)_n$. The invariance of $Q$ gives now:
\begin{align*}
(Qg_\lambda)(x) &= \sum_{y\in G} Q(x,y) g_\lambda(y) 
= \sum_{y_0\in G} Q(x,y)\indic{\pi_\Lambda(y)=\lambda} f(\pi_0(y))
\\
&= \sum_{y_0\in G_0} Q((\pi_\Lambda(x),\pi_0(x)),(\lambda,y_0))f(y_0) = \sum_{y_0\in G_0} Q((0,\pi_0(x)),(\lambda-\pi_\Lambda(x),y_0))f(y_0)
\end{align*}
by $\Lambda$-invariance for $Q$. Summation over $\Lambda$ eliminates the dependence on $\pi_\Lambda(x)$ and we thus obtain:
\[
\Espc{ f(\pi_0(X_{n+1})) }{\ca{F}_n} = (Q_0 f)(\pi_0(X_n))
\]
with $Q_0(x_0,y_0)=\sum_{\lambda\in\Lambda} Q((0,x_0),(\lambda,y_0))$.
\end{proof}

Moreover, similar calculations show that the process $(\pi_\Lambda(X_n))_n$ knowing the process $(\pi_0(X_n))_n$ is a heterogeneous Markov chain whose rates\footnote{the probability of a jump between $\lambda$ and $\lambda'$ between times $k$ and $k+1$ is $Q_k(\lambda,\lambda' | \pi_0(X))= Q((\lambda,X_k),(\lambda',X_{k+1}))/Q_0(X_k,X_{k+1})$.} depend on the $(\pi_0(X_n))_n$.

\subsection{Decomposition into pseudo-excursions}
   \label{subsect:pseudoexcursions}
We start with a general definition of pseudo-excursions for an $E$-valued sequence:

\begin{definition}
  \label{def:pseudoExcOnE}
Let $(x_n)_{n\in\setN}$ be an $E$-valued sequence and $(T_k)_{k\in\setN}$ be a strictly increasing sequence in $\setN$ such that $T_0=0$ and $T_{k+1}-T_k=L_k>0$. We introduce the sequence $\lambdatil_p(x):=x_{T_{p}}$. Let $o$ be an additional cemetery point added to $E$. The pseudo-excursions $\ExcursionE^{(p)}(x)$ of the sequence $(x_n)_{n\in\setN}$ are then defined as $E\cup \{o\}$-valued processes through:
\begin{align*}
\ExcursionE^{(k)}(x)_n = 
\begin{cases}
x_{T_k+n}-\lambdatil_k(x) & \text{if $0\leq n\leq L_k$} \\
o 	& \text{if $n>L_k$}
\end{cases}
\end{align*}

\end{definition}

The global trajectory $(x_n)_n$ can be recovered from the excursions by:
\begin{equation}
 \label{eq:TrajectoryFromExc}
x_n = \lambdatil_{\kappa(n)}(x) + \ExcursionE^{(\kappa(n))}(x)_{n-T_{\kappa(n)}}
\end{equation}
where $\kappa(n)$ is the unique integer such that $T_{\kappa(n)}\leq n <T_{\kappa(n)+1}$.

We will now give a definition of pseudo-excursions which applies to a specific class of $G$-valued sequences we are interested in (with $G$ as in definition \ref{def:periodic subgraph}). For this purpose, we will slightly change definition~\ref{def:pseudoExcOnE}.

For $(x_n)_{n\in\setN}\in G^{\setN}$, we introduce the sequence of excursion times of $(\pi_0(x_n))_{n\in\setN}$ from its original point:
\begin{align*}
T_0	&= 0,
\\
T_{k+1}	&=	\inf \left\{ n>T_k: \pi_0(x_n)=\pi_0(x_0) \right\},		\qquad k\geq 0.
\end{align*}
 
\begin{definition}
  \label{def:pseudoExcOnG}
Let $(x_n)_{n\in\setN}$ be a $G$-valued sequence such that $(\pi_0(x_n))_{n\in\setN}$ is recurrent (i.e. each point of $G_0$ appears an infinity of times in the sequence). Set $\lambda_k(x) =\pi_\Lambda(x_{T_k})$ and $L_k=T_{k+1}-T_k$ ($L_k$ is the duration of an excursion). Let $o$ be an additional cemetery point added to $G$. The pseudo-excursions $\Excursion^{(k)}(x)$ of the sequence $(x_n)_{n\in\setN}$ are defined as $G\cup \{o\}$-valued processes through:
\begin{equation}
\label{eqdef:pseudoexc}
\Excursion^{(k)}(x)_n = 
\begin{cases}
x_{T_k+n}-\lambda_k(x) & \text{if $0\leq n\leq L_k$} \\
o 	& \text{if $n>L_k$}
\end{cases}
\end{equation}

\end{definition}
Although the above definition can be viewed as a particular case of definition~\ref{def:pseudoExcOnE}, its interest consists in exploiting the decomposition of elements of $G$ in $\Lambda\times G_0$-valued couples. This enables us to translate only the $\Lambda$-valued component and thus to start each pseudo-excursion from a point $y\in G$ such that $\pi_0(y)=\pi_0(x_0)$. Moreover, as we keep here close to the classical definition of excursions, we can deal with the Markov chain $(X_n)_{n\in\setN}$ (and not only $(\pi_0(X_n))_{n\in\setN}$) and make computations of the L\'evy area easier. In the rest of the article, we will prefer definition~\ref{def:pseudoExcOnG} when we talk of a (recurrent) $G$-valued sequence, and definition~\ref{def:pseudoExcOnE} will apply whenever we make a statement concerning any $E$-valued sequence.

One immediately checks that $\Excursion^{(k)}(x)_0=\pi_0(x_0)$ and $\Excursion^{(k)}(x)_{L_k}=\pi_0(x_0)+\lambda_{k+1}(x)-\lambda_k(x)$. Our construction of pseudo-excursions makes them invariant under translation by an element of $\Lambda$: $\Excursion^{(k)}(\mu+x)=\Excursion^{(k)}(x)$ where $\mu$ is any element of $\Lambda$. 

\begin{property} Let $(X_n)_n$ be a $\Lambda$-invariant Markov chain on the periodic graph $G$ such that the projection $(\pi_0(X_n))_n$ is irreducible. The $(G\cup\{o\})^\setN$-valued random variables $(\Excursion^{(k)}(X))_{k\in\setN}$ are independent and identically distributed.
\end{property}

\begin{proof}
The proof relies on the repetitive use of the strong Markov property. Let $n\in\setN^*$ and let $f_0,f_1,\ldots,f_n: (G\cup\{o\})^\setN\to \setR$ be bounded measurable functions. The random times $T_k$ are stopping times, which are finite almost surely. For $k\leq n-1$, the random variables $\ca{E}_k= \Excursion^{(k)}(X)$ are $\ca{F}_{T_{k}}$-measurable, and we obtain:
\begin{align*}
\Espci{x}{ f_0(\ca{E}_0)\ldots f_{n-1}(\ca{E}_{n-1}) f_n(\ca{E}_n) }{\ca{F}_{T_n}}
&= f_0(\ca{E}_0)\ldots f_{n-1}(\ca{E}_{n-1}) \Espci{x}{  f_n(\ca{E}_n) }{\ca{F}_{T_n}}
\end{align*}
The strong Markov property thus yields:
\begin{align*}
 \Espci{x}{  f_n(\ca{E}_n) }{\ca{F}_{T_n}} 
 &=
  \Espi{X_{T_n}}{f_n(\ca{E}_0)} = \Espi{(\lambda_n(x),\pi_0(x))}{f_n(\ca{E}_0)} \\
 & = 
  \Espi{(\lambda_0(x),\pi_0(x))}{f_n(\Excursion^{(0)}(X+\lambda_n-\lambda_0))}   
 = \Espi{(\lambda_0(x),\pi_0(x))}{f_n(\ca{E}_0)}
\end{align*}
a.s., where the last equality is deduced from the $\Lambda$-invariance of the process and of the pseudo-excursions. One now remarks that the last term does not depend anymore on $X_{T_{n}}$. By recursion, we obtain the final result:
\[
\Espi{x}{ f_0(\ca{E}_0)\ldots f_{n-1}(\ca{E}_{n-1}) f_n(\ca{E}_n) }
= \Espi{x}{  f_0(\ca{E}_0) }\Espi{x}{  f_1(\ca{E}_0) } \Espi{x}{  f_n(\ca{E}_0) }
\]
\end{proof}

\begin{corollary}
\label{cor:indepdeltalambda}
The random variables $(\lambda_{k+1}(X)-\lambda_{k}(X))_{k\in\setN}$ are also i.i.d.
\end{corollary}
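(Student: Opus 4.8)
The plan is to observe that $\lambda_{k+1}(X) - \lambda_k(X)$ is a \emph{fixed} measurable functional of the single pseudo-excursion $\Excursion^{(k)}(X)$, and then to invoke the elementary fact that applying one and the same measurable map to each term of an i.i.d. sequence produces again an i.i.d. sequence. Since the preceding Property already gives that $(\Excursion^{(k)}(X))_{k\in\setN}$ are i.i.d., this reduces the corollary to a purely deterministic bookkeeping statement about the excursion paths.

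First I would encode the lifetime of an excursion as a measurable map on path space. On $(G\cup\{o\})^{\setN}$, set $L(\omega) = \inf\{ n\geq 0 : \omega_{n+1} = o\}$; this reads off the duration of the excursion directly from the path, and it is Borel measurable because it is defined through countably many coordinate conditions. By construction $L(\Excursion^{(k)}(X)) = L_k$ almost surely. I would then read off the displacement using the two identities recalled just before the Property, namely $\Excursion^{(k)}(X)_0 = \pi_0(X_0)$ and $\Excursion^{(k)}(X)_{L_k} = \pi_0(X_0) + \lambda_{k+1}(X) - \lambda_k(X)$, which combine to give
\[
\lambda_{k+1}(X) - \lambda_k(X) = \Excursion^{(k)}(X)_{L_k} - \Excursion^{(k)}(X)_0 = \Phi\bigl(\Excursion^{(k)}(X)\bigr),
\]
where $\Phi(\omega) := \omega_{L(\omega)} - \omega_0$ is a single measurable map from $(G\cup\{o\})^{\setN}$ to $\Lambda$ that does \emph{not} depend on $k$. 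The crucial point making $\Phi$ index-independent is exactly the $\Lambda$-invariant normalization built into definition~\ref{def:pseudoExcOnG}: each excursion is translated so as to start from the fixed point $\pi_0(X_0)$, so the common offset cancels in the difference $\omega_{L(\omega)} - \omega_0$.

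Finally, since $\Phi$ is measurable and the same for every index, the sequence $(\Phi(\Excursion^{(k)}(X)))_{k\in\setN} = (\lambda_{k+1}(X) - \lambda_k(X))_{k\in\setN}$ inherits the i.i.d. property from $(\Excursion^{(k)}(X))_{k\in\setN}$, which is the assertion. I do not expect any genuine obstacle here: the only thing that really needs checking is that the increment depends on the excursion \emph{alone} and not on the absolute position of the chain along $\Lambda$, and this is precisely guaranteed by the translation invariance of the pseudo-excursions noted after definition~\ref{def:pseudoExcOnG}. Everything else is the routine measurability of the lifetime functional.
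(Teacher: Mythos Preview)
Your proposal is correct and follows exactly the paper's approach: the paper's proof is the one-line observation that $\lambda_{k+1}(X)-\lambda_{k}(X)= \Excursion^{(k)}(X)_{L_k}-\Excursion^{(k)}(X)_{0}$ together with the i.i.d.\ property of the pseudo-excursions. You have simply made explicit the measurability of the lifetime functional and of the map $\Phi$, which the paper leaves implicit.
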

\begin{proof}
This follows directly from $\lambda_{k+1}(X)-\lambda_{k}(X)= \Excursion^{(k)}(X)_{L_k}-\Excursion^{(k)}(X)_{0}$ and the independence of the pseudo-excursions.
\end{proof}

\textbf{Remark:} If $G$ is a periodic graph and $M\in GL_n(\setR)$, then $MG=\{ Mx; x\in G \}$ is again a periodic graph (with possibly degenerate vertices). If $(X_n)_n $ is a $\Lambda$-invariant Markov chain on $G$, then $(MX_n)_n$ is again a $M\Lambda$-invariant Markov chain on $MG$. We assume all through the paper that $\setR^n=\Span \Lambda$; if this is not the case, we embed the graph $G$ in the smaller space $\Span \Lambda$ isomorphic to some $\setR^n$.  Let $\mathtt{C}$ be the covariance matrix of the increment $\lambda_1(X)-\lambda_0(X)$, we always assume that $\mathtt{C}=M^*I_{n}M$. If $\rank \mathtt{C} < n$, we again embed our Markov chain in a smaller graph in a smaller space such that $\rank \mathtt{C}=n$. Then, up to reduction to a smaller space and up to an invertible linear transformation of the graph, we may always assume that $\mathtt{C}=I_n$. In particular, under $\mathtt{C}=I_n$, the Donsker embedding of the random walk $(\lambda_k(x))_k$ converges to a standard Brownian motion on $\setR^n$. Moreover, the covariance matrix $\mathtt{C}$ and the drift $v$ are the analogues of the Albanese metric and the asymptotic direction respectively from the article \cite{artRWCristalLatt}.

\textbf{Example of section \ref{subsect:IntroExample}.} The Markov chain $(Z'_n)$ fits into this framework with $G=\setZ^2$. The non-zero elements of the transition matrix $Q$ are represented in figure \ref{fig:Z4Z}. The matrix $Q$ is $\Lambda$-invariant with $\Lambda=(2\setZ)^2$. The set $G_0=\{(0,0),(1,0),(0,1),(1,1)\}$ may be identified to $\setZ/4\setZ$, and so the Markov process $(\pi_0(Z'_n))_n$ is actually deterministic and corresponds to the shift $x\mapsto x+1$ as in figure \ref{fig:Z4Z}.

\subsection{Area process and rough paths}

\begin{definition}[area sequence]
Let $E$ be a finite-dimensional vector space. Let $(e_i)_{1\leq i\leq d}$ be a basis of $E$. We write $x^{(i)}$ for the $i$-th coordinate of a vector $x\in E$ w.r.t. the basis $(e_i)_{1\leq i \leq d}$.
For any $E$-valued sequence $(x_n)_{n\in\setN}$, we introduce the sequence of antisymmetric $d\times d$ matrices $(A_n(x))_{n\in\setN}$ defined by $A_0(x)=A_1(x)=0$ and, for any $n\geq 2$,
\begin{equation}
  \label{formula:decompArea}
A^{ij}_n(x) = \sum_{1\leq k < l \leq n}  \left( (\Delta x^{(i)})_k (\Delta x^{(j)})_l - (\Delta x^{(j)})_k (\Delta x^{(i)})_l \right)
\end{equation}
with $(\Delta u)_k= u_k-u_{k-1}$ for any sequence $(u_n)_n$.
\end{definition}
This definition can be tied easily to the stochastic area $A_t(x^{(N)})$ of $x^{(N)}$ from formula~\ref{eq:Embedding}: it is easy to check that 
\begin{equation}
  \label{eq:AreaDiscCont}
A_t(x^{(N)})=\frac{ A_{\intpart{Nt}}(x) +  (Nt-\intpart{Nt})( A_{\intpart{Nt}+1}(x)
- A_{\intpart{Nt}}(x))}{N}
\end{equation}

\begin{property}[decomposition of an area sequence along excursions]
  \label{prop:areaDecomp}
Let $(x_n)_{n\in\setN}$ and $(T_k)_{k\in\setN}$ be as in definition~\ref{def:pseudoExcOnE}. Then the following decomposition holds:
\begin{equation}
A_{T_n}^{ij}(x) = \sum_{p=0}^{n-1} A^{ij}_{L_p}(\ExcursionE^{(p)}(x)) 
+ 
A^{ij}_{n} (\lambdatil(x))
\end{equation}

In the particular case when $(x_n)_{n\in\setN}$ and $(T_k)_{k\in\setN}$ are as in definition~\ref{def:pseudoExcOnG}, we have:
\begin{equation}
A_{T_n}^{ij}(x) = \sum_{p=0}^{n-1} A^{ij}_{L_p}(\Excursion^{(p)}(x)) 
+ 
A^{ij}_{n} (\lambda(x))
\end{equation}
\end{property}

\begin{proof}
By definition, the l.h.s. uses a double sum over $1\leq k<l \leq T_n$. We split the interval $\{1,2,\ldots, T_n\}$ into $J_p=\{T_p+1,\ldots,T_{p+1}\}$ for $p=0,\ldots,n-1$ and we classify the indices $k$ and $l$: either they are in the same subset $J_p$ or they belong respectively to $J_{p_1}$ and $J_{p_2}$ with $p_1<p_2$. 

In the first case, the sum over $T_r+1\leq k < l \leq T_{r+1}$ gives the area of the $r$-th excursion $A_{L_r}^{ij}(\Excursion^{(r)}(x))$.

In the second case, the sum over $k\in J_{p_1}$ and $l\in J_{p_2}$ factorizes into two sums, evaluated as telescopic sums respectively to $\lambda_{p_i+1}(x)-\lambda_{p_i}(x)$ for $i=1,2$. The remaining sum over $0\leq p_1<p_2 \leq n-1$ gives the (signed) area of $(\lambda_k(x))_{k\in\setN}$ between $0$ and $n$.
\end{proof}

We need a last lemma, easy to prove, from linear algebra, about the transformation of the area under a linear transformation $M$ of the space $E$.

\begin{lemma}[covariance of the area]
Let $(x_n)_n$ be an $E$-valued sequence and $M\in GL_n(E)$. The area process $(A_n^{ij}(x))_n$ of the sequence $(Mx_n)_{n}$ in $E$ is given by:
\begin{equation*}
A_n^{ij}(Mx) =  \sum_{1\leq k,l\leq n} M_{ik} M_{jl} A_n^{kl}(x)
\end{equation*}
\end{lemma}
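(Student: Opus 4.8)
The plan is to reduce everything to the defining formula \eqref{formula:decompArea} and to the linearity of the coordinates under $M$; there is no analytic content, only a bilinear expansion. First I would record how the coordinates transform. Since $M$ acts linearly on $E$, the $i$-th coordinate of $Mx_n$ is $(Mx_n)^{(i)} = \sum_a M_{ia}\, x_n^{(a)}$, where $a$ ranges over the basis indices. Taking discrete increments commutes with this linear relation, so that $(\Delta (Mx)^{(i)})_k = \sum_a M_{ia}\, (\Delta x^{(a)})_k$ for every time index $k$.

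Next I would substitute these expressions directly into the definition of $A_n^{ij}(Mx)$. Each product $(\Delta (Mx)^{(i)})_k\,(\Delta (Mx)^{(j)})_l$ expands into the double sum $\sum_{a,b} M_{ia} M_{jb}\,(\Delta x^{(a)})_k\,(\Delta x^{(b)})_l$, and similarly for the antisymmetric partner $(\Delta (Mx)^{(j)})_k\,(\Delta (Mx)^{(i)})_l$. Because the coefficients $M_{ia} M_{jb}$ are constant in the time indices $k,l$, I can interchange the two finite sums and pull the matrix entries out of the sum over $1 \le k < l \le n$. The remaining inner sum of $(\Delta x^{(a)})_k (\Delta x^{(b)})_l - (\Delta x^{(b)})_k (\Delta x^{(a)})_l$ over $k<l$ is, by \eqref{formula:decompArea}, exactly $A_n^{ab}(x)$. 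Collecting terms yields $A_n^{ij}(Mx) = \sum_{a,b} M_{ia} M_{jb}\, A_n^{ab}(x)$, which is the claimed identity up to relabelling the summation indices.

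The only point requiring any care — and the nearest thing to an obstacle — is the notational clash between basis indices and time indices: in the statement of the lemma the summation letters $k,l$ run over the basis of $E$, whereas in the defining formula they are time indices. I would therefore rename the time indices at the outset, after which the argument is a one-line distributivity computation.
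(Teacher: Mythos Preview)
Your proof is correct and is precisely the straightforward bilinear expansion one would expect; the paper itself omits the proof entirely, introducing the lemma only as ``easy to prove, from linear algebra''. Your observation about the notational overload (the symbols $k,l,n$ doing double duty as time indices in \eqref{formula:decompArea} and as basis/dimension indices in the lemma statement) is well taken and is the only real subtlety.
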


\subsection{The group $G^2(E)$}
  \subsubsection{The general construction}
     \label{subsect:G2E}
  
In this section, we rewrite some results from the rough path theory from \cite{FrizVicBook} (in particular from chapter 7) in order for them to correspond to the case of a finite-dimensional vector space $E$ on $\setR$. We concentrate on the case that is of interest to this article, namely $N=2$. For more details and the general case $N\geq 2$ see \cite{FrizVicBook} or \cite{FrizHairerBook}.

We introduce the \textit{tensorial truncated algebra} $T^{(2)}(E)=\bigoplus_{k=0}^2E^{\otimes k}$, where $\otimes$ is the tensorial product on $E$ ($E^{\otimes 0}=\setR$) and $\bigoplus$ denotes a direct sum. Endowed with the multiplication law
\begin{align}
  \label{formula:operationT2E}
  (a_0,a_1,a_2)\otimes_2 (b_0,b_1,b_2) = (a_0b_0, a_0 b_1 + b_0 a_1, a_0 b_2 + b_0 a_2+a_1\otimes a_2),
\end{align}
it is a non-commutative algebra with unit element $(1,0_E,0_{E\otimes E})$.

To $x \in \ca{C}^{1-var}([s,t],E)$ (the set of all continuous paths of finite $1$-variation), we associate the element of $T^{(2)}(E)$ given by
\begin{align*}
S_{2}(x)_{s,t}=\left(1,\int_{s<u<t}dx_u,
        \int_{s<u_{1}<u_{2}<t}
        dx_{u_1}\otimes dx_{u_{2}}\right)\in T^{(2)}(E)
\end{align*}
This object satisfies \textit{Chen's relation}, i.e., for $0\leq s<r<t\leq 1$,
\begin{align}
  \label{formula:Chen}
S_{2}(x)_{s,t}=S_{2}(x)_{s,r}\otimes_2 S_{2}(x)_{r,t}
\end{align}
and, in this particular case, $\otimes_2$ can be viewed as a path concatenation operator. 

As in section 7.5.1 in \cite{FrizVicBook}, we define the set $G^2(E)$ by
\begin{align*}
G^2(E)=\{S_{2}(x)_{0,1}:\ x\in \ca{C}^{1-var}([0,1],E)\}
\end{align*}
We now denote by $\mathbb{X}=(1,\mathbb{X}^{(1)},\mathbb{X}^{(2)})$ an element of $G^2(E)$, where $\mathbb{X}^{(1)}$ stands for the first-order and $\mathbb{X}^{(2)}$ for the second-order increments. Implicitly, $\mathbb{X}_{s,t}=S_2(x)_{s,t}$ for some $x\in \ca{C}^{1-var}([0,1],E)$, and $\mathbb{X}_{t}=\mathbb{X}_{0,t}$.
Since the symmetrical part of $\mathbb{X}^{(2)}$ depends on $\mathbb{X}^{(1)}$ (as $\int ydy=\frac{1}{2}y^2$), we can cut off redundant information by transforming $\mathbb{X}^{(2)}$ into
$\mathbf{X}^{(2);i,j}_{t}=\int_{0}^{t}(X^{i}_{s}-X^{i}_{0})dX^{j}_s-\int_{0}^{t}(X^{j}_{s}-X^{j}_{0})dX^{i}_s$ for $1\leq i,j \leq dim(E)$. Under this new form, the element $\mathbb{X}$ belongs to the space $\bigoplus_{k=0}^2E^{\wedge k}$, where $\wedge$ is the antisymmetric tensor product on $E$: for $u,v\in E$, $u\wedge v = u\otimes v - v\otimes u$.

For commodity reasons, we can use a more informal notation by neglecting the first component (the identity element) of $\mathbb{X}$.

Of course, the elements of the type $(x_t,A_t(x))$, which are the ones we are interested in, belong to $G^2(E)$. In particular, we can isolate from the operation $\otimes_2$ a very important property of the stochastic area, namely, for $0\leq s<t$
\begin{align}
  \label{eq:PropArea}
 A_t^{ij}(x)=A_s^{ij}(x) + A_{s,t}^{ij}(x) + \dfrac{1}{2}(x_s^{(i)}(x_t-x_s)^{(j)} - x_s^{(j)}(x_t-x_s)^{(i)})
\end{align} 

\subsubsection{The Carnot-Caratheodory distance on $G^2(E)$}
  \label{subsect:CarnotCaraDist}
It is natural to ask what is the shortest path in $E$ for a given signature. The answer to this question allows to define the \textit{Carnot-Caratheodory norm on $G^2(E)$} by
\begin{align}
  \label{def:CarnotCaraNorm}
\norm{g}:=\inf\{\int_{0}^{1}\normA{dx}:\ x\in \ca{C}^{1-var}([0,1],E)\text{ and }S_{2}(x)_{0,1}=g\}
\end{align}
where $\normE{\cdot}$ is a restriction to $E$ of the Euclidean norm.

Since the norm thus defined is homogeneous ($\norm{\delta_\lambda g}=|\lambda|\norm{g}$ for $\lambda\in\setR$), symmetric ($\norm{g}=\norm{g^{-1}})$ and sub-additive ($\norm{g\otimes h}\leq\norm{g}+\norm{h}$), it induces a left-invariant, continuous metric $\mathbf{d}$ on $G^2(E)$ through the application 
\begin{align}
  \label{def:CarnotCaraDistance}
  \begin{array}{cccc}
  \mathbf{d}: & G^2(E) \times G^2(E) &\to      &\mathbb{R}_{+}\\
     & (g,h)                   &\mapsto    &\norm{g^{-1}\otimes_2 h}
  \end{array}
\end{align}
In this case, $(G^2(E),\mathbf{d})$ is a geodesic space (in the sense of definition 5.19 from \cite{FrizVicBook}). It is also a Polish space (corollary 7.50 from \cite{FrizVicBook}).

The Carnot-Caratheodory norm is difficult to use for practical estimations but we can give it a good upper bound:

\begin{proposition}
  \label{prop:majorDist}
There exists $\nu>0$ such that, for $\mathbf{d}$ defined as above, for any $\mathbb{X}\in G^2(E)$ and $0\leq s<t\leq 1$, we have:
\begin{align}
\distE{\mathbb{X}_{s}}{\mathbb{X}_{t}}=\norm{\mathbb{X}_{s,t}}
 \leq \nu\left(\normE{\mathbf{X}^{(1)}_{s,t}} + \normEE{\mathbf{X}^{(2)}_{s,t}}^{\frac{1}{2}}\right)
\end{align}
\end{proposition}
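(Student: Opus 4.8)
The plan is to reduce the Carnot--Carathéodory norm to the two explicit tensorial components by exploiting the homogeneity of the norm under the dilatation $\delta_\epsilon$. Recall that $\norm{\delta_\epsilon g}=\abs{\epsilon}\norm{g}$, while the two graded pieces scale as $\mathbf{X}^{(1)}\mapsto \epsilon\mathbf{X}^{(1)}$ and $\mathbf{X}^{(2)}\mapsto \epsilon^2\mathbf{X}^{(2)}$. Consequently the quantity $\normE{\mathbf{X}^{(1)}_{s,t}} + \normEE{\mathbf{X}^{(2)}_{s,t}}^{1/2}$ is itself a homogeneous function of degree $1$ for $\delta_\epsilon$. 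The strategy I would follow is therefore to prove the inequality up to the multiplicative constant $\nu$ \emph{only on a compact set}, namely a sphere for one of the two homogeneous functions, and then to propagate it to all of $G^2(E)$ by rescaling.

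First I would observe that both maps $g\mapsto\norm{g}$ and $g\mapsto \normE{g^{(1)}} + \normEE{g^{(2)}}^{1/2}$ are continuous on $G^2(E)$, the former because $\mathbf{d}$ is continuous (as stated just before the proposition) and the latter by continuity of the Euclidean norms. Both functions are also homogeneous of degree $1$ under $\delta_\epsilon$, and both vanish only at the unit element $(1,0_E,0_{E\otimes E})$: indeed $\norm{g}=0$ forces the infimum over $1$-variation paths to be attained by a constant path, while $\normE{g^{(1)}} + \normEE{g^{(2)}}^{1/2}=0$ forces both graded components to vanish. Next I would fix the compact set $S=\{g\in G^2(E): \normE{g^{(1)}} + \normEE{g^{(2)}}^{1/2}=1\}$, which is compact because $G^2(E)$ is Polish and the level set of a proper continuous homogeneous function is closed and bounded in the relevant topology. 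On $S$ the continuous function $g\mapsto\norm{g}$ attains a finite maximum, which I call $\nu$; it is finite because $S$ is compact and $\norm{\cdot}$ is finite everywhere (every $g\in G^2(E)$ is by definition $S_2(x)_{0,1}$ for some finite $1$-variation path, so the infimum in \eqref{def:CarnotCaraNorm} is over a nonempty set).

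Then for an arbitrary $g\ne \unit$ I set $r=\normE{g^{(1)}} + \normEE{g^{(2)}}^{1/2}>0$ and write $g=\delta_r h$ with $h=\delta_{r^{-1}}g\in S$; by homogeneity of both functions this is consistent since $\normE{h^{(1)}} + \normEE{h^{(2)}}^{1/2}=r^{-1}\cdot r=1$. Applying homogeneity of the norm and the bound on $S$ gives
\begin{equation*}
\norm{g}=r\,\norm{h}\leq r\,\nu = \nu\left(\normE{g^{(1)}} + \normEE{g^{(2)}}^{1/2}\right),
\end{equation*}
and the case $g=\unit$ is trivial. Specialising to $g=\mathbb{X}_{s,t}$ and recalling that $\distE{\mathbb{X}_s}{\mathbb{X}_t}=\norm{\mathbb{X}_s^{-1}\otimes_2\mathbb{X}_t}=\norm{\mathbb{X}_{s,t}}$ via Chen's relation \eqref{formula:Chen} yields the claimed estimate.

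The main obstacle I anticipate is the compactness of the level set $S$: one must verify that the homogeneous gauge $\normE{g^{(1)}} + \normEE{g^{(2)}}^{1/2}$ is proper, i.e.\ that its sublevel sets are bounded and its level sets closed in the topology of $(G^2(E),\mathbf{d})$, and that $G^2(E)$ is topologically closed so that the level set is genuinely compact. This is essentially a statement that the homogeneous structure of $G^2(E)$ is compatible with its manifold topology, and it is exactly the kind of fact encoded in the ``ball-box'' equivalence of homogeneous norms on Carnot groups; I would cite the corresponding statement from \cite{FrizVicBook} (the equivalence of homogeneous norms, e.g.\ around Proposition~7.45 therein) rather than reprove it, since the whole point of the estimate is to replace the hard-to-compute Carnot--Carathéodory norm by the elementary graded expression on the right-hand side.
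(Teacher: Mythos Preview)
The paper does not actually prove this proposition: it is stated at the end of Section~2.4.2 as a ready-made tool, with the surrounding discussion referring to \cite{FrizVicBook} for the theory of the Carnot--Carath\'eodory norm. In other words, the paper treats the inequality as a direct instance of the equivalence of homogeneous norms on $G^2(E)$ and does not supply an argument of its own.

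Your proposal is correct and is exactly the standard proof of that equivalence: both $\norm{\cdot}$ and $g\mapsto \normE{g^{(1)}}+\normEE{g^{(2)}}^{1/2}$ are continuous, degree-one homogeneous under $\delta_\epsilon$, and vanish only at the identity; compactness of the unit sphere for the second gauge then yields the constant $\nu$. Your own closing remark that this is the content of the equivalence of homogeneous norms (around Proposition~7.45 in \cite{FrizVicBook}) is precisely what the paper is implicitly invoking. So there is no discrepancy of method here---you have simply written out what the paper leaves to the reference. The only point worth flagging is the one you identify yourself: compactness of the level set $S$ requires knowing that the graded gauge is proper on $(G^2(E),\mathbf{d})$, and that is indeed part of the cited material rather than something one can extract from the paper alone.
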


\section{Proof of theorem \ref{thm:convergencegenerale} and comments}
   \label{sect:Proof}
    \subsection{Proof of theorem \ref{thm:convergencegenerale} }
  We denote by $\abs{\cdot}$ the absolute value on $\setR$, by $\normE{\cdot}$ the euclidean norm on the finite-dimensional vector space $E$ and by $\normEE{\cdot}$ the induced matrix norm on $E\otimes E$: $\normEE{A}=\underset{\normE{x}=1}{\sup}\normE{Ax}$. We also use the norm $\norm{\cdot}$ on $G^2(E)$ and the associated distance $\distE{\cdot}{\cdot}$ (definition~\ref{def:CarnotCaraDistance}). We set, for $u,v\in G^2(E)^{l}$, $\distEmulti{l}{u^1,\ldots,u^l}{v^1,\ldots,v^l}=\sum_{i=1}^{l}\distE{u^i}{v^i}$: $\mathbf{d}_{l}$ is a distance on $G^2(E)^{l}$.
 
For any $n\in\setN$, we define $\kappa(n)$ as the unique integer such that $T_{\kappa(n)}\leq n<T_{\kappa(n)+1}$, where the $T_n$s are as in definitions~\ref{def:pseudoExcOnE} or~\ref{def:pseudoExcOnG} (as has already been done in section~\ref{subsect:pseudoexcursions}).

\begin{proof}
 Since $\lambdatil_k(\Xtil)=\lambda_k(X)-T_{k}v$ (with $\lambda_k(X)$ and $\lambdatil_k(\Xtil)$ as in definitions~\ref{def:pseudoExcOnG} and~\ref{def:pseudoExcOnE} respectively), the process $(\lambdatil_k(\Xtil))_k$ is an $E$-valued centered random walk (not $\Lambda$-valued because of the correction). As it has been stated in the theorem, up to a dimensional reduction and a linear transformation of the graph $G$, the covariance matrix of $\lambda_1(X)-T_1v$ may always be assumed to be $C I_n$ with $C>0$, so each ($E$-valued) increment has a covariance matrix equal to $C I_n$.

 The main idea of the proof of theorem \ref{thm:convergencegenerale} is to use the theory of pseudo-excursions from section~\ref{subsect:pseudoexcursions} and the decomposition from property \ref{prop:areaDecomp} in order to extract convergence to the standard Brownian rough path through the process $(\lambdatil_n(\Xtil))_{n\in\setN}$ using theorem 3 \cite{artBreuilFriz}, convergence to the area anomaly through the independence of pseudo-excursions, and tightness from additional results on pseudo-excursions. Consequently, the proof of theorem \ref{thm:convergencegenerale} is divided into $4$ steps: 
 
\begin{itemize}
\item lemma \ref{lem:step1}: convergence of the centered discrete process $\left(\Embedding^{(N)}(\lambdatil_\bullet(\Xtil),A_{\bullet}(\lambdatil(\Xtil)))_t\right)_{0\leq t\leq\tau}$
\item lemma \ref{lem:step2}: convergence of the extracted process $\left(\Embedding^{(N)}(\Xtil_{T_\bullet},A_{T_\bullet}(\Xtil))_t\right)_{0\leq t\leq \tau}$ and emergence of the area anomaly (drift) $\Gamma$
\item lemma \ref{lem:step3}: convergence of finite-dimensional marginals of the full process $\left(\Embedding^{(N)}(\Xtil_\bullet,A_{\bullet}(\Xtil))_t\right)_{0\leq t\leq\tau}$
\item lemma \ref{lem:step4}: tightness of the sequence $\left(\Embedding^{(N)}(\Xtil_\bullet, A_\bullet(\Xtil))\right)_{N\in\setN}$
\end{itemize}

\begin{lemma}\label{lem:step1}
 The process $\left(\Embedding^{(N)}\left(\lambdatil_\bullet(\Xtil),A_\bullet(\lambdatil(\Xtil))\right)_t\right)_{0\leq t\leq \tau}$ converges in distribution to the L\'evy lift on $G^2(E)$ of a Brownian motion $(B_t)_{t\ge 0}$:
\begin{align*}
\left(\delta_{\sqrt{C^{-1}}}\Embedding^{(N)}\left(\lambdatil_\bullet(\Xtil),A_\bullet(\lambdatil(\Xtil))\right)_t\right)_{0\leq t\leq \tau}
\xrightarrow[N\to\infty]{(d)} 
\left(B_t,\mathcal{A}_t\right)_{0\leq t\leq \tau}
\end{align*}
in the topology of $\ca{C}^{0,\alpha-\mathrm{Holder}}([0,\tau],G^2(E))$ for $\alpha<1/2$.
\end{lemma}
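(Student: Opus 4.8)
The plan is to recognise the lemma as a direct instance of the Donsker-type rough path theorem of \cite{artBreuilFriz} applied to the random walk $(\lambdatil_\bullet(\Xtil))$, so that essentially all the work lies in checking that this walk meets the hypotheses of that theorem. First I would record that $\lambdatil_{k+1}(\Xtil)-\lambdatil_k(\Xtil)=(\lambda_{k+1}(X)-\lambda_k(X))-L_k v$, which, via the identity $\lambda_{k+1}(X)-\lambda_k(X)=\Excursion^{(k)}(X)_{L_k}-\Excursion^{(k)}(X)_0$ and the fact that $L_k$ is the length of the $k$-th excursion, is a deterministic functional of the single pseudo-excursion $\Excursion^{(k)}(X)$. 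Since the pseudo-excursions $(\Excursion^{(k)}(X))_k$ are i.i.d.\ (the Property preceding Corollary~\ref{cor:indepdeltalambda}), the increments of $(\lambdatil_\bullet(\Xtil))$ are i.i.d., so this is a genuine random walk. Centering is then immediate from the choice of $v$: one has $\Esp{\lambdatil_{k+1}(\Xtil)-\lambdatil_k(\Xtil)}=\Esp{X_{T_1}}-\Esp{T_1}\, v=0$ by the definition $v=\Esp{T_1}^{-1}\Esp{X_{T_1}}$.

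Next I would verify the integrability needed to feed the walk into the theorem. Although the increments are \emph{unbounded}, they have finite moments of all orders: over an excursion of length $L_k$ the displacement is controlled by the bounded-increment hypothesis, $\normE{\lambda_{k+1}(X)-\lambda_k(X)}=\normE{X_{T_{k+1}}-X_{T_k}}\leq R\,L_k$, whence $\normE{\lambdatil_{k+1}(\Xtil)-\lambdatil_k(\Xtil)}\leq (R+\normE{v})\,L_k$; since each $L_k$ has the law of $T_1$, which carries finite moments of all orders, every moment of the increment is finite. This comfortably exceeds the moment requirement inherited from the Stroock--Varadhan estimate underlying \cite{artBreuilFriz}. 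By the normalisation discussed in the Remark of Section~\ref{subsect:pseudoexcursions}, after the dimensional reduction and the linear change of coordinates the increment covariance equals $C\,I_n$, and in particular is non-degenerate.

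With these facts in hand I would invoke Theorem~3 of \cite{artBreuilFriz}: the Donsker embedding $\Embedding^{(N)}(\lambdatil_\bullet(\Xtil),A_\bullet(\lambdatil(\Xtil)))$ of the centered i.i.d.\ walk together with its discrete signed area converges in distribution, in the $\ca{C}^{0,\alpha-\mathrm{Holder}}([0,\tau],G^2(E))$ topology for $\alpha<1/2$, to the Lévy lift of a Brownian motion with covariance $C\,I_n$; applying the homogeneous dilation $\delta_{\sqrt{C^{-1}}}$ rescales the covariance to the identity and produces the standard Brownian motion $B$ together with its Lévy area $\ca{A}$. Because the second-level component $A_\bullet$ is purely antisymmetric, the Itô/Stratonovich discrepancy in the symmetric part never enters, so the limiting second level is unambiguously the Lévy area. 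The only genuinely delicate point is the verification of the moment hypothesis, since here --- unlike for $(X_n)_n$ itself --- the i.i.d.\ increments are unbounded; this is exactly what the all-orders moments of $T_1$ resolve, after which the lemma follows from \cite{artBreuilFriz} with no further computation.
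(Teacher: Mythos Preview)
Your proposal is correct and follows exactly the same approach as the paper: both recognise the lemma as a direct application of the Donsker-type rough path theorem of \cite{artBreuilFriz} to the i.i.d.\ centered random walk $(\lambdatil_k(\Xtil))_k$. You simply flesh out the verification of the hypotheses (i.i.d.\ structure via pseudo-excursions, centering via the definition of $v$, moment bounds via $\Esp{T_1^p}<\infty$) that the paper leaves implicit.
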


\begin{proof}
This is a direct consequence of the Donsker-type theorem for a sequence of i.i.d. centered $G^2(\setR^2)$-valued random variables from \cite{artBreuilFriz}. In this article, the authors use a central limit theorem for centered i.i.d. variables on a nilpotent Lie group in order to prove the convergence of finite-dimensional distributions, and Kolmogorov's criterion to prove the tightness of the sequence. 
\end{proof}


\begin{lemma}\label{lem:step2}
The sequence of processes $\left(\delta_{\sqrt{C^{-1}}}\Embedding^{(N)}(\Xtil_{T_\bullet},A_{T_\bullet}(\Xtil))_t\right)_{0\leq t\leq \tau}$ 
converges in distribution to
$\left(B_{t},\mathcal{A}_{t} + t\Gamma\right)_{0\leq t\leq \tau}$ 
in the topology of $\ca{C}^{0,\alpha-\mathrm{Holder}}([0,\tau],G^2(E))$ for $\alpha<1/2$, with $\Gamma$ given by~\eqref{eq:Gamma}.
\end{lemma}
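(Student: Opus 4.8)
The plan is to reduce Lemma~\ref{lem:step2} to Lemma~\ref{lem:step1} by isolating, inside the area $A_{T_n}(\Xtil)$, the single contribution that Lemma~\ref{lem:step1} does not already control, and then to show that this contribution produces exactly the deterministic drift $t\Gamma$. First I would note that the first-level component of the extracted process is \emph{literally} the one of Lemma~\ref{lem:step1}: since $\Xtil_{T_k}=\lambdatil_k(\Xtil)$, the rescaled positions, after the Donsker embedding and the dilation $\delta_{\sqrt{C^{-1}}}$, form the very same continuous path, whose limit is the Brownian motion $B$. It remains to treat the second level, and here I would apply Property~\ref{prop:areaDecomp} to $(\Xtil_n)_n$, which gives for $n=\intpart{Nt}$
\begin{equation*}
A_{T_n}(\Xtil)=A_n(\lambdatil(\Xtil))+\sum_{p=0}^{n-1}A_{L_p}(\ExcursionE^{(p)}(\Xtil)),
\end{equation*}
where the first summand is precisely the skeleton area already handled by Lemma~\ref{lem:step1} (converging to $\mathcal{A}_t$) and the second summand is the genuinely new term.

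Next I would analyse the excursion sum $\sum_{p<n}A_{L_p}(\ExcursionE^{(p)}(\Xtil))$. Because the pseudo-excursions $(\ExcursionE^{(p)}(\Xtil))_p$ are i.i.d.\ (as established in Section~\ref{subsect:pseudoexcursions}, in the same spirit as Corollary~\ref{cor:indepdeltalambda}), the matrices $A_{L_p}(\ExcursionE^{(p)}(\Xtil))$ are i.i.d.\ as well, and they are integrable: the bounded-increment hypothesis gives a bound of the form $\normEE{A_{L_0}(\ExcursionE^{(0)}(\Xtil))}\leq R^2 L_0^2$, and $L_0=T_1$ has moments of all orders. After the area rescaling $N^{-1}$ from the embedding and the factor $C^{-1}$ from $\delta_{\sqrt{C^{-1}}}$, the law of large numbers shows that the step path $t\mapsto C^{-1}N^{-1}\sum_{p<\intpart{Nt}}A_{L_p}(\ExcursionE^{(p)}(\Xtil))$ converges pointwise to the deterministic linear path $t\mapsto t\,\Gamma$ with $\Gamma=C^{-1}\Esp{A_{L_0}(\ExcursionE^{(0)}(\Xtil))}$, which is the value recorded in formula~\eqref{eq:Gamma}. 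I would then upgrade this to convergence in the $\alpha$-Hölder topology for $\alpha<1/2$: the centred fluctuation is a random walk rescaled by $N^{-1}$ rather than $N^{-1/2}$, so a Kolmogorov-type moment estimate, legitimate because all moments of the excursion area are finite, controls its $\alpha$-Hölder norm by $O(N^{-1/2})$, which vanishes.

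Finally I would assemble the two pieces. Since the drift lives purely on the second level and pure-area elements are central in $G^2(E)$ (in \eqref{formula:operationT2E} no cross term appears when the first level of one factor is zero), the extracted rough path factorises as the Lemma~\ref{lem:step1} rough path multiplied by the central pure-area path $t\mapsto\bigl(1,0,C^{-1}N^{-1}\sum_{p<\intpart{Nt}}A_{L_p}(\ExcursionE^{(p)}(\Xtil))\bigr)$. The latter converges \emph{in probability} to the deterministic path $(1,0,t\Gamma)$, while the former converges in distribution to $(B,\mathcal{A})$; a Slutsky-type argument together with the continuity of the group multiplication on $G^2(E)$ then yields joint convergence of the product to $(B_t,\mathcal{A}_t+t\Gamma)$, which is the claim. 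I expect the main obstacle to be exactly this last upgrade: converting the scalar law of large numbers for the excursion areas into a mode of convergence strong enough, namely $\alpha$-Hölder and compatible with the non-commutative structure of $G^2(E)$, to be combined with the distributional limit of Lemma~\ref{lem:step1}. The bounded-increment assumption and the finiteness of all moments of $T_1$ are precisely what make the required Hölder moment estimates go through.
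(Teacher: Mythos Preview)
Your proposal is correct and follows essentially the same route as the paper: identify $\Xtil_{T_n}=\lambdatil_n(\Xtil)$, apply Property~\ref{prop:areaDecomp} to split $A_{T_n}(\Xtil)$ into the skeleton area $A_n(\lambdatil(\Xtil))$ plus the i.i.d.\ excursion areas, use the law of large numbers on the latter, and conclude by a Slutsky-type argument combined with Lemma~\ref{lem:step1}. One small point: the paper goes one step further and expands $A_{L_p}(\ExcursionE^{(p)}(\Xtil))$ as $A_{L_p}(\Excursion^{(p)}(X))+\textbf{Corr}_p(X)$ in order to obtain the explicit two-term expression~\eqref{eq:Gamma}; your formula $\Gamma=C^{-1}\Esp{A_{L_0}(\ExcursionE^{(0)}(\Xtil))}$ is the same quantity, but since the lemma statement refers to~\eqref{eq:Gamma} you should carry out that algebraic expansion (also note the increment bound for $\Xtil$ is $\tilde R=R+\normE{v}$, not $R$). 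Conversely, you are more explicit than the paper about upgrading the law of large numbers to $\alpha$-H\"older convergence via a Kolmogorov moment bound, which is exactly what is needed to make the Slutsky step rigorous in that topology.
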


This is the part of the proof where the area anomaly $\Gamma$ first appears. We will see that, between~\ref{lem:step1} and~\ref{lem:step2}, nothing changes on the first level of the new sequence, since the embedding is obtained by linear interpolation and therefore does not keep track of the trajectory between $T_n$ and $T_{n+1}$. Simultaneously, a complementary term appears on the second level, in the expression of the stochastic area. This is due to the fact that, whereas the specific trajectory of an excursion is not memorized, its area is registered in the continuous embedding.

\begin{proof}

We have trivially by definition~\ref{def:pseudoExcOnE}:
\begin{align*}
\Xtil_{T_n}= 
 \lambdatil_n(\Xtil)
\end{align*}
Moreover, property \ref{prop:areaDecomp} applied to $\Xtil$ gives:
\begin{align}
A^{ij}_{T_n}(\Xtil) & = 
       A^{ij}_{n} (\lambdatil(\Xtil))
       + 
      \sum_{p=0}^{n-1} A^{ij}_{L_p}(\ExcursionE^{(p)}(\Xtil))
\end{align}
Each term $A^{ij}_{L_p}(\ExcursionE^{(p)}(\Xtil))$ represents exactly the area of the $(p+1)$-th excursion and the total sum is the complementary second-level term mentioned above.

Let us decompose using \eqref{formula:decompArea}:
\begin{align*}
 A^{ij}_{L_p}(\ExcursionE^{(p)}(\Xtil))
   ={} &
   \sum_{1\leq k < l \leq L_p}  \left( (\Delta \ExcursionE^{(p)}(\Xtil)^{(i)})_k (\Delta \ExcursionE^{(p)}(\Xtil)^{(j)})_l \right.
  \\ & - 
  \left. (\Delta \ExcursionE^{(p)}(\Xtil)^{(j)})_k (\Delta \ExcursionE^{(p)}(\Xtil)^{(i)})_l \right) 
  \\ ={} &
 A^{ij}_{L_p}(\Excursion^{(p)}(X))
\\ & +
\left(\sum_{1\leq k < l \leq L_p}\left( (X_{T_{p}+l} - X_{T_{p}+l-1}) - (X_{T_{p}+k} - X_{T_{p}+k-1})\right)\right)^{(i)} v^{(j)}
\\& - 
v^{(i)} \left(\sum_{1\leq k < l \leq L_p}\left( (X_{T_{p}+l} - X_{T_{p}+l-1}) - (X_{T_{p}+k} - X_{T_{p}+k-1})\right)\right)^{(j)} 
\end{align*}
We set 
\begin{align*}
\textbf{Corr}^{ij}_p(X)  = &
\left(\sum_{1\leq k < l \leq L_p}\left( (X_{T_{p}+l} - X_{T_{p}+l-1}) - (X_{T_{p}+k} - X_{T_{p}+k-1})\right)\right)^{(i)} v^{(j)}
 \\ & 
 - 
v^{(i)} \left(\sum_{1\leq k < l \leq L_p}\left( (X_{T_{p}+l} - X_{T_{p}+l-1}) - (X_{T_{p}+k} - X_{T_{p}+k-1})\right)\right)^{(j)} 
\end{align*}
and we call this term the \textit{area drift correction}. Since the increments of $X$ are bounded by a certain $R>0$, we deduce that 
\begin{align*}
\left|\textbf{Corr}^{ij}_p(X)\right|\leq K_{v,R}L_p^{2}
\end{align*}
where $K_{v,R}$ is a constant depending on $v$ and $R$. Likewise, we obtain
\begin{align*}
\left|A^{ij}_{L_p}(\Excursion^{(p)}(X))\right|\leq K'_{R}L_p^{2}
\end{align*}
where $K'_R$ is a constant depending on $R$. We can thus conclude that all the $A^{ij}_{L_p}(\ExcursionE^{(p)}(\Xtil))$ are integrable. Moreover, these variables are i.i.d., since $A^{ij}_{L_p}(\Excursion^{(p)}(X))$ and $\textbf{Corr}^{ij}_p(X)$ depend only on $\Excursion^{(p)}(X)$.
Thus, by the law of large numbers the following convergence holds:
\begin{align*} 
\frac{1}{n}\sum_{p=0}^{n-1} A_{L_p}^{ij}(\ExcursionE^{(p)}(\Xtil)) \xrightarrow[n\to\infty]{\as}  \Esp{ A_{L_0}^{ij}(\ExcursionE^{(0)}(\Xtil))}
 =  \Esp{A_{L_0}^{ij}(\Excursion^{(0)}(X))}+\Esp{\textbf{Corr}^{ij}_0(X)}
\end{align*} 
Slutsky's theorem for metric spaces states that, for two sequences $(X_n)_n$ and $(Y_n)_n$ on a metric space $(S,\rho)$ and such that $\rho(X_n,X)\to 0$ and $\rho(X_n,Y_n)\to 0$, then $\rho(Y_n,X)\to 0$ (see, for example, \cite{bookBillConv}, theorem 3.1). Applying it to the sequences 
\begin{align*}
& X_N=\left(\Embedding^{(N)}\left(\lambdatil_\bullet(\Xtil),A_\bullet(\lambdatil(\Xtil))\right)_t\otimes(0,0,t\Gamma)\right)_{0\leq t\leq \tau}
\\ &
Y_N=\left(\Embedding^{(N)}(\Xtil_{T_\bullet},A_{T_\bullet}(\Xtil))_t\right)_{0\leq t\leq \tau}
\end{align*}
we can conclude by using the result from lemma~\ref{lem:step1} that
\begin{align*}
\left(\delta_{\sqrt{C^{-1}}}\Embedding^{(N)}(\Xtil_{T_\bullet},A_{T_\bullet}(\Xtil))_t\right)_{0\leq t\leq \tau}
\xrightarrow[N\to\infty]{(d)}
\left(B_{t},\mathcal{A}_{t} + t\Gamma\right)_{0\leq t\leq \tau} 
\end{align*}
where the coefficients of the $d\times d$ (with $d=dim(E)$) matrix $\Gamma$ are given by 
\begin{equation}
  \label{eq:Gamma}
  \Gamma^{ij}=C^{-1}(\Esp{A_{L_0}^{ij}(\Excursion^{(0)}(X))} + \Esp{\textbf{Corr}^{ij}_0(X)})
  \end{equation} 
The matrix $\Gamma$ is the announced area anomaly. It is immediate from definition~\ref{formula:decompArea} that  $A_{L_0}^{ij}(\ExcursionE^{(0)}(\Xtil)) = - A_{L_0}^{ji}(\ExcursionE^{(0)}(\Xtil))$, which implies that $\Gamma$ is antisymmetric.
\end{proof}

\begin{lemma}\label{lem:step3}
For any $t_1<t_2<\ldots<t_k\in \setR_{+}$, we have
\begin{align*}
&  \left(
   \delta_{\sqrt{C^{-1}\beta}}\Embedding^{(n)}\left(\Xtil_{\bullet},A_{\bullet}(\Xtil)\right)_{t_1}
   ,\ldots,
   \delta_{\sqrt{C^{-1}\beta}}\Embedding^{(n)}\left(\Xtil_{\bullet},A_{\bullet}(\Xtil)\right)_{t_k}
   \right)
\\ &
 \xrightarrow[n\to\infty]{(d)}
  \left((B_{t_1},\mathcal{A}_{t_1} + {t_1}\Gamma),\ldots,(B_{t_k},\mathcal{A}_{t_k} + {t_k}\Gamma)\right)
\end{align*}
\end{lemma}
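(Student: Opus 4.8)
The plan is to upgrade the convergence of Lemma~\ref{lem:step2}, which is stated for the process sampled at the stopping times $T_\bullet$, to convergence of the full process at arbitrary deterministic times $t_1<\ldots<t_k$. The key heuristic is the law of large numbers for the renewal sequence $(T_k)_k$: since $\beta=\Esp{T_1}=\Esp{L_0}<\infty$ and the excursion lengths $L_p$ are i.i.d.\ by the i.i.d.\ property of the pseudo-excursions, the renewal theorem gives $T_n/n \xrightarrow{\as} \beta$, equivalently $\kappa(\intpart{nt})/n \xrightarrow{\as} t/\beta$ for each fixed $t$. This is exactly why the time-rescaling by $\beta$ enters the dilatation $\delta_{\sqrt{C^{-1}\beta}}$: sampling along $T_\bullet$ advances real time by a factor $\beta$ relative to the number of completed excursions, so the embedded process run at rate $N$ and the extracted process of Lemma~\ref{lem:step2} differ precisely by this deterministic time change.

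First I would make the change of time explicit. For a fixed $t$, the full process $\Embedding^{(n)}(\Xtil_\bullet,A_\bullet(\Xtil))_t$ evaluates $\Xtil$ and $A(\Xtil)$ at index $\intpart{nt}$, whereas the extracted process of Lemma~\ref{lem:step2} evaluates them at the stopping times $T_\bullet$. I would write $\intpart{nt}$ as $T_{\kappa(\intpart{nt})}$ plus a remainder lying in the current excursion, and then argue that the discrepancy between the two processes is negligible in the Carnot--Carath\'eodory distance $\distE{\cdot}{\cdot}$. Using Proposition~\ref{prop:majorDist}, the distance between $\Embedding^{(n)}(\Xtil_\bullet,A_\bullet(\Xtil))_t$ and the value sampled at the nearest $T_\bullet$ is bounded by $\nu(\normE{\mathbf{X}^{(1)}_{s,t}} + \normEE{\mathbf{X}^{(2)}_{s,t}}^{1/2})$ evaluated over a single pending excursion; since increments are bounded by $R$ and the pending excursion has length at most $L_{\kappa(\intpart{nt})}$, this error is $O(L_{\kappa(\intpart{nt})}^2/n)$ for the area part and $O(L_{\kappa(\intpart{nt})}/\sqrt{n})$ for the first level. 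Because the $L_p$ have finite moments of all orders (as noted after the definition of $T_1$), a standard Borel--Cantelli or maximal-inequality argument shows $\max_{p\leq n} L_p = o(\sqrt{n})$ a.s., so these errors vanish and the full process and the $T_\bullet$-sampled process have the same limit in distribution. I would then invoke Slutsky's theorem (as in the proof of Lemma~\ref{lem:step2}) to transfer the limit.

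The joint (multi-time) statement follows by applying this reduction coordinatewise in the product space $(G^2(E)^k,\mathbf{d}_k)$, together with the fact that Lemma~\ref{lem:step2} already yields convergence of the \emph{process} indexed by $T_\bullet$ on the whole interval $[0,\tau]$, hence of its finite-dimensional marginals jointly; the time-change map $t\mapsto t/\beta$ being deterministic and continuous, joint convergence at $t_1,\ldots,t_k$ is preserved by the continuous mapping theorem. The main obstacle is the control of the pending excursion at each deterministic time: one must verify that inserting a fixed time $t$ in the middle of an excursion does not create a macroscopic contribution to either the displacement or the area. This is where boundedness of increments and the finiteness of all moments of $L_0$ are essential, and I expect the careful estimate $\max_{p\leq n}L_p=o(\sqrt n)$ a.s., combined with the sublinear bound from Proposition~\ref{prop:majorDist}, to be the technical heart of the argument.
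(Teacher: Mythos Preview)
Your proposal is correct and follows essentially the same route as the paper: reduce to Lemma~\ref{lem:step2} by showing that the contribution of the pending excursion at each deterministic time is negligible in the Carnot--Carath\'eodory distance (via Proposition~\ref{prop:majorDist}), then use $\kappa(\intpart{nt})/n\to t/\beta$ a.s.\ together with Slutsky / continuous mapping to absorb the random time change, and finally pass to the multivariate case by summing the one-time bounds.

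The only noteworthy difference is in how the pending excursion is controlled. The paper does not use your almost-sure bound $\max_{p\leq n}L_p=o(\sqrt n)$; instead it bounds the error directly by $\Rtil(T_{\kappa(\intpart{nt})+1}-T_{\kappa(\intpart{nt})})$ on the first level and $d\Rtil^2(T_{\kappa(\intpart{nt})+1}-T_{\kappa(\intpart{nt})})^2$ on the second, then applies Chebyshev with the (finite) second moment of $T_1$ to get convergence in probability. This is marginally simpler than your Borel--Cantelli route since convergence in probability suffices for Slutsky. Conversely, your almost-sure max bound sidesteps the size-biasing subtlety hidden in the paper's identification $\Esp{(T_{\kappa(\intpart{nt})+1}-T_{\kappa(\intpart{nt})})^2}=\Esp{T_1^2}$. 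One small omission in your sketch: the paper also explicitly checks that the linear interpolation $\Embedding^{(n)}(\cdot)_t$ differs from the integer-time value $\roughXtil_{\intpart{nt}}$ by a negligible amount; this is straightforward but should be mentioned.
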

In this lemma, we pass from the embeddings of an extracted sequence $\left(\Embedding^{(N)}(\Xtil_{T_\bullet},A_{T_\bullet}(\Xtil))_t\right)_{0\leq t\leq \tau}$ to the embeddings of the full sequence $\left(\Embedding^{(N)}(\Xtil_{\bullet},A_{\bullet}(\Xtil))_t\right)_{0\leq t\leq \tau}$. We show that in the term $\delta_{\diln}\left(\Xtil_{\intpart{nt}},A_{\intpart{nt}}(\Xtil)\right)$ the only part that counts at the limit is the one given by the excursions up to time $n$, i.e. $\delta_{\diln}\left(\Xtil_{\kappa(\intpart{nt})},A_{\kappa(\intpart{nt})}(\Xtil)\right)$. At the same time, the constant $\beta$ appears in the renormalization, since we have to take into consideration the approximate length of an excursion up to time $n$ $\frac{n}{\kappa(n)}$, and $\beta$ is precisely the a.s. limit of this sequence.

\begin{proof}
 
 Set $\roughXtil_{n}=\left(\Xtil_{n},A_{n}(\Xtil)\right)$. With the upper bound from proposition~\ref{prop:majorDist} (since it doesn't play an important role here, we suppose that $\nu=1$), for $t\in [0,\tau]$, we get the inequality:
\begin{align*}
   \distE{\delta_{\diln}\roughXtil_{T_{\kappa(\lfloor nt \rfloor)}}}{\delta_{\diln}\roughXtil_{\lfloor nt \rfloor}}
   \leq 
   \dfrac{1}{\sqrt{n}}
   \normE{\Xtil_{\lfloor nt\rfloor} - \Xtil_{T_{\kappa(\lfloor nt\rfloor)}}}
   +
    \dfrac{1}{\sqrt{n}}
    \normEE{A_{T_{\kappa(\lfloor nt\rfloor)},\lfloor nt\rfloor}(\Xtil)}^{\frac{1}{2}}
\end{align*}
We are going to use this decomposition in order to prove that 
$\distE{\delta_{\diln}\mathbb{X}_{T_{\kappa(\lfloor nt \rfloor)}}}{\delta_{\diln}\mathbb{X}_{\lfloor nt \rfloor}}$ converges in probability to $0$. 

We set $\Rtil=R+\normE{v}$. First, it is easy to see that if $k'\leq k<k''$, by triangular inequality we have a.s.
\begin{align}
   \label{formula:MajorX}
\normE{\Xtil_{k-k'}}\leq \sum_{l=1}^{k-k'}\normE{\Xtil_{l} - \Xtil_{l-1}}\leq \Rtil(k-k')\leq \Rtil(k''-k')
\end{align}
Next, since $A_{n}(\Xtil)$ is a $d\times d$ matrix (with $d=dim(E)$), we have
\begin{align}
  \label{formula:MajorA}
\normEE{A_{k-k'}(\Xtil)}
\leq 
\sum_{i=1}^{d}\max_{j=1,\ldots,d}\abs{A^{ij}_{k-k'}(\Xtil)}
\leq
d\Rtil^2(k-k')^{2}
\leq
d\Rtil^2 (k''-k')^{2}
\end{align}
Further on, by strong Markov property, for $\epsilon>0$, the Chebyshev's inequality, together with~\eqref{formula:MajorX},
implies, for the first term,
\begin{align*}
\prob{\dfrac{1}{\sqrt{n}}
   \normE{\Xtil_{\lfloor nt\rfloor} - \Xtil_{T_{\kappa(\lfloor nt\rfloor)}}}>\epsilon }
  &  \leq
    \dfrac{\Esp{\normE{\Xtil_{\lfloor nt\rfloor - T_{\kappa(\lfloor nt\rfloor)}}}^2}}{n\epsilon^2}
      \leq
       \dfrac{\Rtil^2 \Esp{(T_{\kappa(\lfloor nt\rfloor)+1} - T_{\kappa(\lfloor nt\rfloor)})^2}}{n\epsilon^2}
  \\ & = 
  \dfrac{\Rtil^2\Esp{T_1^2}}{n\epsilon^2}
\end{align*}
and, for the second term, together with~\eqref{formula:MajorA},
\begin{align*}
\prob{\dfrac{1}{\sqrt{n}}
   \normEE{A_{T_{\kappa(\lfloor nt\rfloor)},\lfloor nt\rfloor}(\Xtil)}^{\frac{1}{2}} >\epsilon}
     \leq
     \dfrac{\Esp{\normEE{A_{T_{\kappa(\lfloor nt\rfloor)},\lfloor nt\rfloor}(\Xtil)}}}{n\epsilon^2}
      \leq
     \dfrac{d\Rtil^2\Esp{T_1^2}}{n\epsilon^2}
\end{align*}
Hence, taking into consideration the fact that $\Esp{T_1^2}<\infty$, we obtain
\begin{align*}
  \label{formula:CVP}
  & \prob{ \distE{\delta_{\diln}\roughXtil_{T_{\kappa(\lfloor nt \rfloor)}}}{\delta_{\diln}\roughXtil_{\lfloor nt \rfloor}} > \epsilon}
    \leq
     \prob{\dfrac{1}{\sqrt{n}}
   \normE{\Xtil_{\lfloor nt\rfloor} - \Xtil_{T_{\kappa(\lfloor nt\rfloor)}}}> \dfrac{\epsilon}{2}}
  \\ & 
    +
    \prob{\dfrac{1}{\sqrt{n}}
    \normEE{A_{T_{\kappa(\lfloor nt\rfloor)},\lfloor nt\rfloor}(\Xtil)}^{\frac{1}{2}}
    > \dfrac{\epsilon}{2}}
       \underset{n\to\infty}{\longrightarrow} 0
\end{align*}
As $\kappa(n)$ is the number of excursions up to a time $n$ of $(\pi_0(X_n))_n$, the ergodic theory tells us that $\kappa(n)/n \to 1/\beta$ a.s. as $n\to\infty$ Consequently, the above convergence in probability combined with the result from lemma \ref{lem:step2} implies
  \begin{align*}
 \delta_{\sqrt{n^{-1}C^{-1}\beta}}\roughXtil_{\lfloor nt\rfloor}
 \xrightarrow[n\to \infty]{(d)} 
 (B_t,\mathcal{A}_t + t\Gamma)
 \end{align*}
What is now left to do is pass from $\roughXtil_{\intpart{nt}}$ to $\Embedding^{(n)}(\Xtil_{\bullet},A_{\bullet}(\Xtil))_{t}$, and in order to do that we have to study the convergence of $\normE{\Xtil_{\intpart{nt}+1}-\Xtil_{\intpart{nt}}}/\sqrt{n}$ and $\normEE{A_{\intpart{nt}+1}(\Xtil)-A_{\intpart{nt}}(\Xtil)}/n$.

We start with $\normE{\Xtil_{\intpart{nt}+1}-\Xtil_{\intpart{nt}}}/\sqrt{n}\leq \Rtil/\sqrt{n} \to 0$ a.s. as $n\to\infty$. Similarly, by formula~\ref{eq:PropArea} and using the fact that $\abs{ab-cd}/2\leq a^2+b^2+c^2+d^2$, we conclude to the following convergence in probability:
\begin{align*}
\dfrac{\normEE{A_{\intpart{nt}+1}(\Xtil)-A_{\intpart{nt}}(\Xtil)}}{n}
& \leq
\dfrac{\Rtil^2+\normE{\Xtil_{\intpart{nt}}-\Xtil_{T_{\kappa(\intpart{nt})}}}^2}{n}
\\ & \leq 
\dfrac{\Rtil^2((T_{\kappa(\intpart{nt})+1} - T_{\kappa(\intpart{nt})})^2+1)}{n}\overset{\ca{P}}{\underset{n\to\infty}{\longrightarrow}} 0 
\end{align*}
We conclude by Slutsky's theorem that
\begin{align*}
\delta_{\sqrt{C^{-1}\beta}}\Embedding^{(n)}(\Xtil_{\bullet},A_{\bullet}(\Xtil))_{t}  
\xrightarrow[n\to\infty]{(d)}
\left(B_t,\ca{A}_t+ t\Gamma \right)
\end{align*}
It is now easy to pass to the multivariate case. Choose $t_1<t_2<..<t_l \in \setR_{+}$. Then we have immediately
 \begin{align*}
  & \prob{\distEmulti{l}{\delta_{\diln}\roughXtil_{T_{\kappa(\intpart{nt_1}}},\ldots,\delta_{\diln}\roughXtil_{T_{\kappa(\intpart{nt_l})}}}{\delta_{\diln}\roughXtil_{\intpart{nt_1}},\ldots,\delta_{\diln}\roughXtil_{\intpart{nt_l}}} > \epsilon}
  \\&
  \leq
  \sum_{i=1}^l \prob{\distE{\delta_{\diln}\roughXtil_{\intpart{nt_i}}}{\delta_{\diln}\roughXtil_{T_{\kappa(\intpart{nt_i})}}} >\dfrac{\epsilon}{l}}  \underset{n\to\infty}{\longrightarrow} 0
 \end{align*}
 Applying once again the result from lemma \ref{lem:step2}, we obtain
 \begin{align*}
  \left( \delta_{\sqrt{n^{-1}C^{-1}\beta}}\roughXtil_{t_1},\ldots,\delta_{\sqrt{n^{-1}C^{-1}\beta}}\roughXtil_{t_k} \right)
 \xrightarrow[n\to\infty]{(d)}
  \left( (B_{t_1},\mathcal{A}_{t_1} + {t_1}\Gamma),\ldots,(B_{t_k},\mathcal{A}_{t_k} + {t_k}\Gamma) \right)
 \end{align*}
 
and we conclude by applying Slutsky's theorem as in the univariate case.
 \end{proof}


\begin{lemma}
  \label{Kolmo}\label{lem:step4}
The sequence $\left(\Embedding^{(n)}(\Xtil_{\bullet},A_{\bullet}(\Xtil))\right)_{n\ge 0}$ is tight in $\alpha$-H\"{o}lder topology for $\alpha<1/2$.
\end{lemma}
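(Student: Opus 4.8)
The plan is to establish tightness via the Kolmogorov criterion for rough paths in the $\alpha$-Hölder topology, which in the $G^2(E)$ setting requires controlling moments of the Carnot-Caratheodory distance $\distE{\cdot}{\cdot}$ between the embedded values at two times. Concretely, I would show that there exist constants $q>1/\alpha$ (in fact large enough, say $q$ an even integer with $q>2$) and $M>0$ such that, uniformly in $n$ and for all $0\leq s<t\leq\tau$,
\begin{align*}
\Esp{\distE{\Embedding^{(n)}(\Xtil_\bullet,A_\bullet(\Xtil))_s}{\Embedding^{(n)}(\Xtil_\bullet,A_\bullet(\Xtil))_t}^q}\leq M\,|t-s|^{q/2}.
\end{align*}
By the Kolmogorov-Chentsov-type criterion on Polish geodesic spaces (as recorded in \cite{FrizVicBook}, and already invoked in \cite{artBreuilFriz} for the i.i.d. case), such a bound with exponent $q/2$ on the right yields tightness in $\ca{C}^{0,\alpha-\mathrm{Holder}}$ for every $\alpha<1/2-1/q$, and letting $q\to\infty$ covers all $\alpha<1/2$.

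First I would use proposition~\ref{prop:majorDist} to reduce the distance estimate to separate moment bounds on the two levels, namely on $\normE{\Xtil^{(n)}_t-\Xtil^{(n)}_s}$ and on $\normEE{A_{s,t}(\Xtil^{(n)})}^{1/2}$; since $\distE{\cdot}{\cdot}\leq\nu(\normE{\cdot}+\normEE{\cdot}^{1/2})$, a bound of order $|t-s|^{1/2}$ on the first term and of order $|t-s|$ on the second (so that its square root is again of order $|t-s|^{1/2}$) will combine to give the desired homogeneity. The first-level bound is standard: $\Xtil_n$ is a centered process with bounded increments controlled by $\Rtil=R+\normE{v}$, so moments of increments over $\intpart{ns}$ to $\intpart{nt}$ scale like those of a centered random walk, and after the $1/\sqrt{n}$ rescaling one gets the $|t-s|^{q/2}$ behavior by a Marcinkiewicz-Zygmund or Burkholder-Davis-Gundy estimate. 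The delicate part is the second level, the area, because it is a genuinely quadratic functional of the increments and the increments of $(\Xtil_n)_n$ are \emph{not} independent within an excursion.

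The hard part will be the area moment estimate, and here I would exploit the excursion decomposition of property~\ref{prop:areaDecomp} to write $A_{s,t}(\Xtil^{(n)})$ as a sum of complete excursion areas $A_{L_p}(\Excursion^{(p)}(\Xtil))$ over the excursions strictly between $s$ and $t$, plus two boundary fragments and the area $A_{\bullet}(\lambdatil(\Xtil))$ of the skeleton random walk on the excursion endpoints. The excursion areas are i.i.d.\ (by the independence property of pseudo-excursions) and centered after subtracting the drift $\beta\Gamma$-part, with moments controlled by $\Esp{L_0^{2q}}<\infty$ since $T_1$ has finite moments of all orders and $\abs{A_{L_p}(\Excursion^{(p)})}\leq K'_R L_p^2$ as already established in lemma~\ref{lem:step2}; the skeleton area is handled exactly as in lemma~\ref{lem:step1} via \cite{artBreuilFriz}. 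The main obstacle is thus twofold: controlling the number of excursions falling in a short time interval (which concentrates around $|t-s|n/\beta$ by the renewal structure, so one needs a uniform-in-$n$ renewal moment bound), and bounding the boundary-fragment areas, whose lengths are the residual excursion durations at $s$ and $t$ and must be shown negligible after rescaling using again $\Esp{T_1^{q}}<\infty$. Once these ingredients are assembled, summing the i.i.d.\ centered excursion contributions by a BDG or Rosenthal inequality gives the $|t-s|^{q/2}$ scaling for the centered part, while the deterministic drift contribution $t\Gamma$ is Lipschitz and hence harmless, completing the verification of the Kolmogorov criterion and therefore the tightness.
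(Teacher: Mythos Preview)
Your approach is viable but takes a genuinely different and more laborious route than the paper's. The paper stays in the group $G^2(E)$ throughout: after the Markov-property reduction and geodesic interpolation, it boils everything down to the single estimate $\Esp{\norm{\roughXtil_n}^{4p}}=O(n^{2p})$ for the group-valued variable $\roughXtil_n=(\Xtil_n,A_n(\Xtil))$, then splits \emph{once} via Chen's relation as $\roughXtil_n=\roughXtil_{T_{\kappa(n)}}\otimes_2\roughXtil_{T_{\kappa(n)},n}$. The first factor is a product of $\kappa(n)\leq n$ centred i.i.d.\ group increments, so the moment bound from \cite{artBreuilFriz} applies verbatim; the second factor has Carnot--Caratheodory norm controlled by a power of $T_{\kappa(n)+1}-T_{\kappa(n)}\stackrel{d}{=}T_1$, hence contributes $O(1)$. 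This sidesteps entirely the renewal-count moment bound and the explicit excursion-by-excursion dissection of the area that you propose. Your plan instead separates into levels via proposition~\ref{prop:majorDist}, decomposes $A_{s,t}$ into all complete excursion areas plus skeleton area plus boundary fragments, and treats each piece by Rosenthal/BDG; this does work, but it re-derives by hand estimates that the sub-additive group norm and the black-box bound of \cite{artBreuilFriz} already package.

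There is also a gap in your first-level argument as written: the increments $\Xtil_{k+1}-\Xtil_k=X_{k+1}-X_k-v$ are bounded but are \emph{not} individually centred (only the excursion-endpoint increments $\lambdatil_{k+1}(\Xtil)-\lambdatil_k(\Xtil)$ are centred, which is the whole point of the choice of $v$), nor do they form a martingale-difference sequence in general. Hence a direct appeal to Marcinkiewicz--Zygmund or BDG is not justified. To repair this you must perform exactly the same excursion decomposition on the first level that you already plan for the second level (complete excursion increments handled as i.i.d.\ centred, boundary fragments bounded by $\Rtil L_p$). Once that is done the argument goes through, so this is a fixable oversight rather than a fatal flaw.
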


\begin{proof}
As in \cite{artBreuilFriz}, we apply here the Kolmogorov's criterion. In order to do so, it will be enough to prove that, for $\tau>0$ fixed, for any $p>1$ there exists a positive constant $c_p$ such that, for all $0\leq s<t\leq \tau$,
\begin{align*}
\sup_n \Esp{\distE{\Embedding^{(n)}(\Xtil_{\bullet},A_{\bullet}(\Xtil))_t}{\Embedding^{(n)}(\Xtil_{\bullet},A_{\bullet}(\Xtil))_s}^{4p}} \leq c_p |t-s|^{2p-1}
\end{align*}
since $(2p-1)/(4p)\to 1/2^{-}$ as $p\to\infty$.

Choose $a>0$. By proposition~\ref{prop:majorDist} and applying to $(X_{n})_{n\in\mathbb{N}}$ the Markov property, we get:
\begin{align*}
& \Esp{\distE{\Embedding^{(n)}(\Xtil_{\bullet},A_{\bullet}(\Xtil))_t}{\Embedding^{(n)}(\Xtil_{\bullet},A_{\bullet}(\Xtil))_s}^{a}}
=
\Esp{\norm{\Embedding^{(n)}(\Xtil_{\bullet},A_{\bullet}(\Xtil))_{s,t}}^{a}}
\\ & =
\Esp{\Espc{\norm{\Embedding^{(n)}(\Xtil_{\bullet},A_{\bullet}(\Xtil))_{s,t}}^{a}}{\Xtil_{\intpart{ns}}}}
=
\Esp{\Espi{\Xtil_{\intpart{ns}}}{\norm{\Embedding^{(n)}(\Xtil_{\bullet},A_{\bullet}(\Xtil))_{t-s}}^{a}}}
\\ & =
\Esp{\norm{\Embedding^{(n)}(\Xtil_{\bullet},A_{\bullet}(\Xtil))_{t-s}}^{a}}
\end{align*}
Since $\left(\Embedding^{(n)}(\Xtil_{\bullet},A_{\bullet}(\Xtil))_t\right)_{0\leq t\leq \tau}$ is constructed by linear connections between the points $\Embedding^{(n)}(\Xtil_{\bullet},A_{\bullet}(\Xtil))_{k/n}$ for $k=0,..,\intpart{n\tau}$, the properties of geodesic interpolation imply that it is sufficient to prove that
\begin{align*}
\dfrac{1}{n^{2p}}\Esp{\norm{\roughXtil_k}^{4p}}\leq c_p\left(\dfrac{k}{n}\right)^{2p}
\end{align*}
for $k=0,..,\intpart{n\tau}$, uniformly over $n\ge 1$. As in \cite{artBreuilFriz}, this follows immediately if we prove, for all $p>1$,
\begin{align*}
\Esp{\norm{\roughXtil_{n}}^{4p}} = O(n^{2p})
\end{align*}
Here, Chen's relation (formula~\eqref{formula:Chen}) gives
\begin{align*}
\roughXtil_{n}=\roughXtil_{T_{\kappa(n)}}\otimes_2 \roughXtil_{T_{\kappa(n)},n}
\end{align*}
where $\otimes_2$ is the product on $G^2(E)$ from section~\ref{subsect:G2E} (it can also be interpreted as a path concatenation operator). As mentioned in section~\ref{subsect:CarnotCaraDist}, the norm $\norm{\cdot}$ is sub-additive. Using strong Markov property and the inequality:
\begin{equation}
\forall a,b\ge 0,\qquad (a+b)^{p}\leq 2^p(a^{p}+b^{p}), \label{eq:star}
\end{equation} we arrive to an initial upper bound:
\begin{align*}
\Esp{\norm{\roughXtil_{n}}^{4p}}
\leq
2^{4p}\left(\Esp{\norm{\roughXtil_{T_{\kappa(n)}}}^{4p}}+\Esp{\norm{\roughXtil_{T_{\kappa(n)},n}}^{4p}}\right)
\end{align*}
On one hand, as $\kappa(n)\leq n$ a.s., we have
\begin{align*}
\Esp{\norm{\roughXtil_{T_{\kappa(n)}}}^{4p}}
\leq
\max_{l=1,\ldots,n}\Esp{\norm{\roughXtil_{T_l}}^{4p}}
=O(n^{2p})
\end{align*}
since $\roughXtil_{T_l}$ is a product of $l$ centred i.i.d. variables ($\roughXtil_{T_l}=\roughXtil_{T_1}\otimes_2\roughXtil_{T_1,T_2}\otimes_2\ldots\otimes_2\roughXtil_{T_{l-1},T_l}$), and therefore $\Esp{\norm{\roughXtil_{T_l}}^{4p}}=O(l^{2p})$ as it was proved in~\cite{artBreuilFriz}.

On the other hand, proposition~\ref{prop:majorDist} (with the convention $\nu=1$), and the inequality \eqref{eq:star}, together with the upper bounds from~\eqref{formula:MajorX} and~\eqref{formula:MajorA},
give
\begin{align*}
\Esp{\norm{\roughXtil_{T_{\kappa(n)},n}}^{4p}}
\leq
2^{4p}\Rtil^{4p}(d^{2p}+1)\Esp{(T_{\kappa(n)+1}-T_{\kappa(n)})^{4p}}
=
2^{4p}\Rtil^{4p}(d^{2p}+1)\Esp{T_1^{4p}}
\end{align*}
We therefore obtain
\begin{align*}
\Esp{\norm{\roughXtil_{n}}^{4p}}
\leq
2^{4p}\left(O(n^{2p}) + 2^{4p}\Rtil^{4p}(d^{2p}+1)\Esp{T_1^{4p}}\right)
= O(n^{2p})
\end{align*}
which achieves the proof.
\end{proof}
The results of lemmas \ref{lem:step3} and \ref{Kolmo} (convergence of finite-dimensional marginals plus tightness) put together give us the final result.
\end{proof}
 
   \subsection{Properties of the area anomaly}
Let us briefly discuss the formula of $\Gamma$ given by \eqref{eq:Gamma}. The main term, the one that we concentrate on, is given by $\Esp{A_{L_0}^{ij}(\Excursion^{(0)}(X))}$, which is the expectation of the stochastic area of an excursion. This is what we were intuitively expecting: the oscillations of the process along an excursion are not visible at the limit in the uniform topology but they generate stochastic area that influences the second level of a rough path through a drift.

The second term, $\Esp{\textbf{Corr}^{ij}_0(X)}$, comes from the fact that the excursions are not necessarily centered. It can be seen as a trace on the second level of the rough path of the fact that the excursions have been re-centered. Of course, if the excursions have zero mean from the beginning as in example \ref{subsect:IntroExample}, this correction term is zero.

These remarks imply that the area anomaly of a Markov chain on periodic graphs \textit{depends on the drift and on the stochastic area of a pseudo-excursion}. Let us see how this is different from the area drift generated exclusively by the drift of the process, as in \cite{artCubature}. Notice that a deterministic drift appears at level $2$ in corollary 3.4 but it is not an area anomaly as it depends entirely on the drift we assign to the Brownian motion $(B_t)_{t\geq 0}$ (and consequently to $W$). However, in this case, we recover supplementary terms on the second level, and we do not have the area+drift scheme anymore. It is important that this corollary allows us to consider the convergence of drifted processes and their stochastic areas without centering them, which is indispensable in the Donsker-type theorems. 

The presence of area anomaly in the limit stochastic area is the reason why the Itô map sometimes fails to be continuous in the uniform topology, and thus, in order to correctly approach an SDE driven by the Brownian motion, we need to be sure to get a zero area anomaly, as it has already been mentioned in the introduction. A discussion on this topic, as well as a method of approaching the Brownian motion and its Lévy area through the rough path of a cubature formula on Wiener space can be found in  \cite{artCubature}.

As it has already been mentioned, the area anomaly is zero when the process is symmetric/reversible, in particular when we consider the sum of centered i.i.d. r.v.'s as in the classical Donsker setting.
We can use this fact to construct a sequence of processes which generate area anomaly "artificially": we start by a process with piecewise linear i.i.d. centered increments and we replace every increment by a path of bounded variation and a stochastic area with a non-zero mean. For example, in the case of the sequence constructed by concatenating i.i.d. copies of the cubature formula on Wiener space from \cite{artCubature}, we concatenate to every copy of the cubature a centered random variable $C_n$ from $\ca{C}^{1-var}([0,1],\setR^d)$ such that the $C_n$'s are i.i.d. and their stochastic areas are of non-zero mean $\Gamma$. In this case, by the law of large numbers, the area anomaly is equal to $\Gamma$ (modulo some renormalization constant).

\section{Applications and open questions}
\label{sect:examples}

\subsection{Application to an SDE} 
   \label{subsect:SDE}
Stochastic differential equations may arise as limits of discrete difference equations. We consider here the simple case of a two-dimensional process $(X_n,Y_n)_{n\in\setN}$ and the difference equation:
\begin{equation}
\label{eq:sdeexample}
U_{n+1}-U_n = \epsilon [ f(U_n) (X_{n+1}-X_n) + g(U_n) (Y_{n+1}-Y_n)]
\end{equation}
where $(U_n)$ is an $\setR$-valued process. If $(X_n,Y_n)_n$ is a random walk converging towards a standard Brownian motion in $\setR^2$ 
under Donsker's embedding and if $\epsilon$ varies as $\epsilon=1/\sqrt{N}$ where $N$ is the parameter of the Donsker embedding, then $U_{n}$ converges to the solution of the SDE
\begin{equation}
dU_t = f(U_t)  dB_t^{1} + g(U_t) dB_t^{2}
\end{equation}
with the Itô prescription. 

We may now substitute a Markov chain on a periodic graph, like the ones described previously, to the random walk. Now, the suitable framework is rough path theory with an area anomaly. If we choose the process from example \ref{subsect:IntroExample}, an easy computation inspired from the proof of theorem \ref{thm:convergencegenerale}, which consists in dividing the process into excursions of length 4, shows that the limit process solves the SDE:
\begin{subequations}
\begin{align}
dU_t =& f(U_t) dB_t^{1} + g(U_t) dB_t^{2} \\
&+  \frac{1}{2} [f'(U_t)f(U_t) +g'(U_t) g(U_t)] K dt \label{eq:classicalterm}
\\
&+ \frac{1}{2} [f'(U_t)g(U_t)-f(U_t)g'(U_t) ] \gamma dt 
\label{eq:anomalousterm}
\end{align}
\end{subequations}
where $\gamma$ is the area anomaly \eqref{eq:rotatinggamma} and $K$ is the variance of the variables $\lambda_k = X_{4k}-X_{4(k-1)}$. The term \eqref{eq:classicalterm} is a well-known term in classical stochastic calculus similar to the Itô/Stratonovitch correction. The term \eqref{eq:anomalousterm} is new and requires the area anomaly. 

As explained in \cite{FrizVicBook}, this behavior is indeed general and generalizes to any Markov chain on a periodic graph satisfying theorem \ref{thm:convergencegenerale}, \emph{mutatis mutandis}. Here again, one notices that both terms \eqref{eq:classicalterm} and \eqref{eq:anomalousterm} are produced by the coarse-graining procedure based on excursions which leads to non-trivial renormalization terms.

\subsection{A three dimensional model with a non-trivial area anomaly.}
  \label{subsect:3D}
We extend the model presented in the introduction to dimension three. This extension is interesting for two main reasons: no particular role is played by the roots of unity as in the introduction and we may choose arbitrary jump rates; moreover, the area anomaly is now an antisymmetric three-by-three matrix which can be arbitrary. Such a process can then be used to obtain a Brownian motion on $SU(2)$ with an area anomaly with the classical identification between $\Lie{su}(2)$ and $\setR^3$ and solving the rough differential equation $dU_t= U_t dB_t$.

The graph $G$ is $\setZ^3$, the lattice $\Lambda$ is $(2\setZ)^3$ and the fundamental domain is thus $G_0=(\setZ/2\setZ)^3$. The only jumps allowed are those between $x$ and $x\pm e_k$ where $e_k$ is one of the three vectors of the canonical basis. The coefficients $Q(x,x')$ depend only on the classes modulo $2$ of each coordinate of $x$ and $x'$. A jump $\pm e_k$ changes the modulo class by $1$ on the coordinate $k$. Once projected onto $G_0$, the two jumps $x\pm e_k$ give the same transition on the cube. $Q$ is then parametrized by $8\times 6$ parameters (the cardinal of $Q_0$ times the number of directions).

In order to kill in a natural way the asymptotic drift $v$ of the process, we assume a central symmetry such that $Q(x,x\pm e_k)= Q(x+(1,1,1), x+(1,1,1)\mp e_k)$. The model is thus parametrized by $24=8\cdot 6/2$ parameters. In the generic case, the area anomaly $\Gamma$ is non-zero.

\begin{figure}
\begin{center}
\begin{tabular}{| l | l | c | c | c | c | c | c |}
\hline
Origin in $G$ & Proj. on $G_0$ &  $+e_1$ & $-e_1$ & $+e_2$ & $-e_2$ & $+e_3$ & $-e_3$
\\
\hline 
$(2k,2l,2m)$ & $(0,0,0)$ &  $u/2$ & $v/2$ & $u/2$ & $v/2$ & 0 & 0 	
\\ 
\hline
$(2k+1,2l,2m)$ & $(1,0,0)$ & 0 & 0 & $u/2$ & $v/2$ & $u/2$ & $v/2$ 	
\\
\hline
$(2k,2l+1,2m)$ & $(0,1,0)$ & $u/3$ & $v/3$ & $v/3$ & $u/3$ & $u/3$ & $v/3$ 
\\
\hline
$(2k+1,2l+1,2m)$ & $(1,1,0)$ & $v/2$ & $u/2$ & 0 & 0 & $u/2$ & $v/2$  
\\
\hline
$(2k,2l,2m+1)$ & $(0,0,1)$ & $u/2$ & $v/2$ & 0 & 0 & $v/2$ & $u/2$ 	
\\
\hline
$(2k+1,2l,2m+1)$ & $(1,0,1)$ & $v/3$ & $v/3$ & $u/3$ & $v/3$ & $v/3$ & $u/3$	
\\
\hline
$(2k,2l+1,2m+1)$ & $(0,1,1)$ & 0 & 0 & $v/2$ & $u/2$ & $v/2$ & $u/2$
\\
\hline
$(2k+1,2l+1,2m+1)$ & $(1,1,1)$ &  $v/2$ & $u/2$ & $v/2$ & $u/2$ & 0 & 0 	
\\
\hline
\end{tabular}
\end{center}
\caption{\label{fig:cubedynamics}Parameter of the dynamics of the cubic model used for the numerical results. The numerical simulations are made for $u=9/10$ and $v=1-u=1/10$ in order to bias the Markov chain to stay in a cube, so that it can develop a non-zero area anomaly $\Gamma$.}
\end{figure} 

Simulations are made for parameters chosen as in figure \ref{fig:cubedynamics} with $u=9/10$ and $v=1/10$ for over $64.10^6$ simulations and the process is observed at time $n=40000$. We obtain the following values for both coordinates of $X_n^{(i)}/\sqrt{n}$:
\begin{itemize}
\item the empirical means  are $-0.0025$, $-0.0020$ and $-0.0025$.
\item the empirical covariance matrix has three coefficients $0.03001$ on the diagonal and the other coefficients are all below $10^{-8}$.
\item the empirical third cumulants are all three below $10^{-6}$
\item the kurtosis are all three $3.0007$, $3.0006$ and $3.0004$. 
\end{itemize}  
The empirical values for the area $A^{ij}_n/(\sigma_i\sigma_j n)$ normalized by the empirical standard deviations of the coordinates are:
\begin{itemize}
\item empirical mean $\Gamma^{12}=1.500$, $\Gamma^{23}=1.500$ and $\Gamma^{31}=-1.500$ (the area anomalies),
\item empirical standard deviations are $0.5011$, $0.5011$ and $0.5011$.
\item empirical third cumulants $-1.46\cdot 10^{-4}$, $-6.9\cdot 10^{-5}$ and $1.1\cdot 10^{-4}$
\item empirical fourth cumulants $0.12533$, $0.12535$ and $0.12532$.
\end{itemize}
All the cumulants correspond to a normal law for the coordinates and a Lévy drifted area for the area process.

\subsection{Application to a stochastic differential equation on $SU(2)$}
   \label{subsect:SU2}
The previous example can be used to study the effect of the area anomaly on an SDE. We identify $\setR^3$ with $\Lie{su}(2)$ through the correspondence $(x,y,z)\to x \sigma_1 +y \sigma_2 + z \sigma_3$ where the $\sigma_1$, $\sigma_2$, $\sigma_3$ are the three Pauli matrices. We may now build the process with values in $SU(2)$ defined by:
\begin{equation}
U_{n+1}^{\epsilon} = U_n^{\epsilon} \frac{I + \epsilon i (X_{n+1}-X_n)}{I - \epsilon i (X_{n+1}-X_n)}
\end{equation}
If $(X_{n})_n$ is a standard random walk on $\Lie{su}(2)$, the process $(X_{n})_n$ converges in law after rescaling to the Brownian motion on $\Lie{su}(2)$ by Donsker's theorem and $U_n^{\epsilon}$ with the correct scaling of $\epsilon$ converges in law to the canonical Brownian motion on $SU(2)$. 

If we replace the random walk by the process $(X_n)_n$ from the previous section, $X_{n}$ converges in law after rescaling to the Brownian motion on $\Lie{su}(2)$ with an area anomaly $\Gamma \in \Lie{su}(2)\wedge \Lie{su}(2)$. Thus $U_{n}^{\epsilon}$ converges now to the solution of the SDE $dU_t = U_t \circ dB_t$ which \emph{must} be interpreted in the rough path sense since traditional stochastic calculus cannot take $\Gamma$ into account.

\subsection{Open questions}

The present result leads to some open questions both about the limit process with the area anomaly and about the discrete models which may converge to such a limit. 

It would be interesting to understand how the area anomaly fits in the Fock space description of Brownian Motion: the question is non-trivial because the Lévy area belongs to the second chaos and the presence of an area anomaly adds a zero-chaos component to the Lévy area.

Next, two-dimensional Brownian motion is known to exhibit conformal symmetry and it is natural to ask how our limit process behaves under conformal transformations.

Since we focus on the area drift, an important question that arises is: can Girsanov's theorem be extended to cancel the area anomaly by a change of measure on the rough path space?

In the introduction, it has been mentioned that other models than those we study, like the ones in a random environment from \cite{artBaur} or the ones on a lattice graph from \cite{artRWCristalLatt}, might generate a non-zero limit drift on the second level of the rough path. Moreover, there is a detail indicating that this drift might be the analogue of our area anomaly: the drift we remove from each of them before studying the convergence of the process is the analogue of the drift $nv$ from the present article. It would thus be interesting to see if we can get a convergence result for the models of these two articles in rough path topology and and what the stochastic area limit drift looks like in this case.

Our proof exhibits striking similarities with \cite{ABT}. Our "internal" $G_0$ space seems to play the same role as the compact sphere in their paper and their proof also uses theory of rough paths to control convergences. We would like to know if one can build models on Riemannian manifolds which may exhibit area anomalies. A good hint might be given by the construction from \cite{artRWCristalLatt}.

In \cite{bookSubLaplacians}, the main result from \cite{artRWDiscreteGroups} is generalized to sub-Laplacians with drift (i.e. we are here in a continuous setting). Just like in \cite{artRWDiscreteGroups}, the author gets a Berry-Esseen-like estimate of the heat kernel. The drift of the Laplacian is not automatically linked with the area anomaly and is considered here to be more like an additional problem than a central object that should be studied. We may use these results for studying our limit motion, i.e. the Brownian rough path with area anomaly.

We can also ask ourselves whether the present paper can lead to generalizing or improving the result from \cite{artCubature}.

Finally, Brownian motion belongs to the larger family of Lévy processes and, consequently, we may wonder whether Lévy processes may be also enhanced with area components and approximated by suitable discrete processes.

\bibliographystyle{plain}
\bibliography{biblio_article}

\end{document}